
\documentclass[12pt]{amsart}

\usepackage{amsmath,amssymb}
\usepackage{amsthm}
\usepackage{hyperref}
\usepackage[margin=1in]{geometry}

\numberwithin{equation}{section}

\newtheorem{prop}{Proposition}
\newtheorem{lemma}[prop]{Lemma}

\newtheorem{thm}[prop]{Theorem}
\newtheorem{cor}[prop]{Corollary}

\numberwithin{prop}{section}

\theoremstyle{definition}
\newtheorem{defn}[prop]{Definition}

\newcommand{\del}{\partial}
\newcommand{\delb}{\bar{\partial}}\newcommand{\dt}{\frac{\partial}{\partial t}}
\newcommand{\brs}[1]{\left| #1 \right|}
\newcommand{\bgb}{\bar{\gb}}
\newcommand{\bZ}{\bar{Z}}
\newcommand{\bW}{\bar{W}}

\newcommand{\gG}{\Gamma}
\renewcommand{\gg}{\gamma}
\newcommand{\bgg}{\bar{\gg}}
\newcommand{\gD}{\Delta}
\newcommand{\gd}{\delta}
\newcommand{\gs}{\sigma}

\newcommand{\gU}{\Upsilon}

\newcommand{\gw}{\omega}
\newcommand{\ga}{\alpha}
\newcommand{\gb}{\beta}
\newcommand{\gL}{\Lambda}
\renewcommand{\ge}{\epsilon}
\newcommand{\N}{\nabla}

\newcommand{\til}[1]{\widetilde{#1}}

\renewcommand{\bar}[1]{\overline{#1}}

\renewcommand{\i}{\sqrt{-1}}
\newcommand{\bA}{\bar{A}}
\newcommand{\bB}{\bar{B}}
\newcommand{\bC}{\bar{C}}
\newcommand{\bD}{\bar{D}}
\newcommand{\bL}{\bar{L}}

\newcommand{\bga}{\bar{\ga}}

\newcommand{\bb}{\bar{b}}

\newcommand{\bj}{\bar{j}}
\newcommand{\bk}{\bar{k}}
\newcommand{\bl}{\bar{l}}
\newcommand{\bm}{\bar{m}}
\newcommand{\bn}{\bar{n}}
\newcommand{\bo}{\bar{o}}
\newcommand{\bp}{\bar{p}}
\newcommand{\bq}{\bar{q}}

\newcommand{\bge}{\bar{\ge}}
\newcommand{\bgU}{\bar{\gU}}

\newcommand{\bnu}{\bar{\nu}}

\newcommand{\IP}[1]{\left<#1\right>}

\DeclareMathOperator{\tr}{tr}

\DeclareMathOperator{\End}{End}

\DeclareMathOperator{\supp}{supp}

\begin{document}

\title[On a Calabi-type Estimate for Pluriclosed Flow]{On a Calabi-type estimate for pluriclosed flow}
\author{Joshua Jordan and Jeffrey Streets}

\begin{abstract} The regularity theory for pluriclosed flow hinges on obtaining $C^{\ga}$ regularity for the metric assuming uniform equivalence to a background metric.  This estimate was established in \cite{StreetsPCFBI} by an adaptation of ideas from Evans-Krylov, the key input being a sharp differential inequality satisfied by the associated `generalized metric' defined on $T \oplus T^*$.  In this work we give a sharpened form of this estimate with a simplified proof.  To begin we show that the generalized metric itself evolves by a natural curvature quantity, which leads quickly to an estimate on the associated Chern connections analogous to, and generalizing, Calabi-Yau's $C^3$ estimate for the complex Monge-Amp\`ere equation.
\end{abstract}

\date{\today}

\maketitle

\section{Introduction}

Pluriclosed flow \cite{PCF} is a geometric flow extending K\"ahler-Ricci flow to more general complex manifolds while preserving the pluriclosed condition for a Hermitian metric, $\i \partial\bar{\partial}\omega=0$.   Associated to a pluriclosed metric we have the Bismut connection, the unique Hermitian connection with skew symmetric torsion, defined by
\begin{align*}
\N^B = D + \tfrac{1}{2} g^{-1} H, \qquad H = d^c \gw,
\end{align*}
where $D$ denotes the Levi-Civita connection.  Let $\Omega^B$ denote the curvature of $\N^B$, and furthermore let
\begin{align*}
\rho_B = \tr \Omega^B \in \Lambda^2.
\end{align*}
This is a closed form representing $\pi c_1$, but is not in general of type $(1,1)$.
The pluriclosed flow can be expressed using the Bismut connection as
\begin{align} \label{f:PCF}
\dt \gw = -\rho_B^{1,1}, \qquad \dt \gb =&\ - \rho_B^{2,0},
\end{align}
where $\gb \in \Lambda^{2,0}$ is the `torsion potential' along the solution (i.e. $\delb \gb = \del \gw$).  First introduced in \cite{StreetsPCFBI}, $\gb$ is not strictly necessary to describe associated the Hermitian metric, but plays a central role in obtaining a priori estimates.

By now there are many global existence and convergence results for pluriclosed flow (\cite{ASNDGKCY, StreetsLee, StreetsPCFBI, StreetsGG, StreetsCGKFlow}).  All of these results exploit some underlying structure of the given background to obtain a priori $L^{\infty}$ estimates for the metric tensor along the flow.  In the setting of K\"ahler-Ricci flow, reduced to a parabolic Monge-Amp\`ere equation, this corresponds to having a $C^{1,1}$ estimate for the potential, at which point one applies either the Evans-Krylov method \cite{EvansC2a, Krylov} to obtain a $C^{2,\ga}$ estimate, or Calabi's $C^3$ estimate \cite{Calabiaffine, PSS, YauCC}, after which Schauder estimates can be applied to obtain $C^{\infty}$ estimates.  As pluriclosed metrics cannot be described locally by a single function, the pluriclosed flow does not admit a scalar reduction, so the method of Evans-Krylov cannot be applied.  As pluriclosed flow is a parabolic system of
equations for the Hermitian metric $g$, obtaining a $C^{\ga}$ estimate for the metric is similar to the
DeGiorgi-Nash-Moser/Krylov-Safonov \cite{DeGi,Nash,Moser,KS1,KS2} estimate for
uniformly parabolic equations.  However these results are known to be false in general for \emph{systems} of equations \cite{DeGiorgiCE}.  Thus a key step in obtaining regularity of pluriclosed flow is to turn $L^{\infty}$ control over the metric into a $C^{\ga}$ estimate.

This regularity barrier was overcome by the second author in \cite{StreetsPCFBI}, with a key role played by the \emph{generalized metric} associated to $g$ and $\gb$.  Specifically, given $g$ a Hermitian metric and $\gb \in \Lambda^{2,0}$, the associated generalized metric\footnote{In the terminology of generalized geometry, the generalized metric is an endomorphism of $T \oplus T^*$ as opposed to a symmetric inner product.  Our metric is related to such an endomorphism by raising one index using the associated neutral inner product.} is, expressed in local coordinates,
\begin{align} \label{f:G}
G = \left( 
\begin{matrix}
g_{i \bj} + \gb_{i k} \bgb_{\bj \bl} g^{\bl k} & \gb_{ip} g^{\bl p}\\
 \bgb_{\bj \bp} g^{\bp k}  & g^{\bl k}
\end{matrix}
\right).
\end{align}
This is a Hermitian metric on $T^{1,0} \oplus \Lambda^{1,0}$, which has unit determinant.  In local complex coordinates, it turns out that $G$ is a matrix subsolution of the linear heat equation.  This together with the fact that $G$ has unit determinant allows one to adapt the strategy behind Evans-Krylov regularity to obtain a $C^{\ga}$ estimate for $G$, after which one obtains a $C^{\ga}$ estimate for $g$ and $\gb$.

Our purpose here is to give a simplified and sharpened version of this estimate which reveals further structure of pluriclosed flow related to the generalized metric and its associated Chern connection.  As a Hermitian metric on a holomorphic vector bundle, $G$ has a canonically associated Chern connection, and curvature tensor $\Omega \in \Lambda^{1,1} \otimes \End (T \oplus T^*)$.  Taking it's trace with respect to the Hermitian metric $g$ and lowering the final index with $G$ yields a natural curvature operator
\begin{align*}
S_{A\bar{B}} =&\ g^{\bj i} \Omega_{i \bj A \bar{B}}.
\end{align*}
As we show in \S \ref{s:eveqns}, under the pluriclosed flow equations (\ref{f:PCF}), the associated generalized metric evolves by
\begin{align} \label{f:Gdot}
\dt G =&\ - S.
\end{align}
This remarkably simple formula leads to a clean evolution equation for the Chern connection associated to $G$, to which the maximum principle can be applied to obtain a $C^1$ estimate for the metric assuming uniform equivalence of $G$ with a background metric.  This computation is similar in style, and in fact generalizes, the classic $C^3$ estimate of Calabi-Yau for real/complex Monge-Amp\`ere equations and K\"ahler-Ricci flow (\cite{Calabiaffine, PSS, YauCC}).  Given this, a blowup argument adapted from \cite{StreetsPCFBI} leads to sharp scale-invariant estimates on all derivatives of $G$, yielding our main result.  Before stating it we record some notation.

\begin{defn} \label{d:gUdef} Given a complex manifold $(M^{2n}, J)$ and Hermitian metrics $g$ and $\til{g}$, let
\begin{align*}
\gU(g,\til{g}) := \N^g - \N^{\til{g}}
\end{align*}
denote the difference of the associated Chern connections.  Similarly, given $G$ and $\tilde{G}$ Hermitian metrics on $T^{1,0} \oplus \Lambda^{1,0}$ let
	\begin{align*}
	\gU(G,\tilde{G}) := \N^G - \N^{\tilde{G}}
	\end{align*}
	be the difference of the associated Chern connections.  Furthermore, let
	\begin{align*}
	f_k = f_k(G,\tilde{G}) := \sum_{j=0}^k \brs{\N^{j} \gU}^{\frac{2}{1+j}}.
\end{align*}
This is a fixed-scale measure of the $(k+1)$-st derivatives of $G$.
\end{defn}

\begin{thm} \label{t:mainthm} Given $(M^{2n}, J)$, fix $(\gw, \gb)$ a solution to pluriclosed flow (\ref{f:PCF}) on $[0,\tau), \tau \leq 1$, with associated generalized metric $G$.  Fix a background generalized metric $\til{G}(\til{g},\til{\gb})$ such that
$\Lambda^{-1}\tilde{G}\leq G \leq \Lambda \tilde{G}$.  There exists $\rho > 0$ depending on $\til{g}$ such that for all $0 < R < \rho$, and $k \in \mathbb N$, there exists a constant $K = K(k,\gL, \til{G})$ such that
\begin{align*}
\sup_{B_{\frac{R}{2}}(p) \times \{t\}} f_k(x,t)\leq K \left( \frac{1}{t} + R^{-4} \right).
\end{align*}
\end{thm}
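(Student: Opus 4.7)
The plan is to follow the Calabi--Yau two-step pattern: first establish the $k=0$ case directly from the evolution equation (\ref{f:Gdot}) via a maximum principle, then bootstrap to higher $k$ through a blowup/point-picking argument adapted from \cite{StreetsPCFBI}.

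For the $k = 0$ case, I would begin by computing the evolution of $\gU = \gU(G, \til{G})$. Using the Chern-connection one-form expression $\gU = G^{-1}\del G - \til{G}^{-1}\del \til{G}$, the identity $\dt G = -S$ from (\ref{f:Gdot}), and the fact that $S$ is itself a $g$-trace of Chern curvature of $G$, the calculation should reorganize into a parabolic system
\[
(\dt - \gD_G) \gU = \gU \ast \gU + L(\til{G}, \til{\gO}),
\]
where $\gD_G$ is a natural Chern Laplacian built from $G$ and $g$ and the last term depends only on the background data. Pairing this with $\gU$ and applying a Kato-type inequality should then produce a Calabi-style differential inequality
\[
(\dt - \gD_G)\brs{\gU}^2 \leq -c \brs{\N \gU}^2 + C_1 \brs{\gU}^2 + C_2,
\]
with $c > 0$ and $C_1, C_2$ depending only on $\gL$ and $\til{G}$. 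To extract the stated bound I would apply the maximum principle to a test quantity of the form $Q := t\,\phi^2 \brs{\gU}^2 + A\,\tr_{\til{G}} G$, where $\phi$ is a spatial cutoff supported in $B_R(p)$ and $A$ is a large constant. The cross terms $\brs{\N \phi}\brs{\N \gU}\brs{\gU}$ are absorbed by $-c\phi^2 \brs{\N \gU}^2$ via Cauchy--Schwarz (at the cost of an $R^{-4}$ contribution after factoring out $t$), while $\tr_{\til{G}} G$ supplies an additional negative gradient term $-c' A \brs{\N \gU}^2$ needed to dominate the $C_1 \brs{\gU}^2$ term. Evaluating at an interior maximum of $Q$ yields $\brs{\gU}^2 \leq K(t^{-1} + R^{-4})$ on $B_{R/2}(p) \times \{t\}$.

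For $k \geq 1$, I would argue by contradiction, mimicking the blowup scheme of \cite{StreetsPCFBI}. If the estimate fails, a point-picking lemma produces a sequence $(p_i, t_i)$ and scales $\lambda_i \to 0$ for which the parabolic rescalings $G_i(x,t) := G(p_i + \lambda_i x, t_i + \lambda_i^2 t)$ are pluriclosed solutions on unit-size parabolic balls in $\mathbb{C}^n$ with $f_k(0, 0) = 1$ and with backgrounds $\til{G}_i$ converging to a flat limit. The $k = 0$ estimate applied to each $G_i$, combined with iterated parabolic smoothing for the evolution equations of $\N^j \gU$ (derived by commuting $\N$ through the equation for $\gU$), yields uniform bounds on all derivatives of order below $k$. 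A smooth limit $G_\infty$ then exists on the parabolic unit ball with $f_k(0,0) = 1$; comparing with the scaling relations for $\brs{\N^j\gU}^{2/(1+j)}$ on the original solution forces a contradiction with either the scale invariance of $f_k$ or the $k=0$ bound satisfied by $G_\infty$.

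The principal obstacle is the sign in the Calabi-type inequality for $\brs{\gU}^2$: producing the negative gradient term with a coefficient bounded below uniformly in $\gL$ is the analogue of Yau's subtle $C^3$ computation, and here it depends crucially on the unit-determinant structure of $G$ together with the special form of $S$ as the full $g$-trace of the Chern curvature of $G$, rather than just its $(1,1)$ part. Organizing the tensor calculation so that this cancellation is manifest, and verifying that the blowup in the higher-$k$ step produces a genuinely nontrivial smooth limit on $\mathbb{C}^n$, are the two places where the argument is most delicate.
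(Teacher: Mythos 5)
Your proposal follows essentially the same route as the paper: the $k=0$ case via a Calabi-type maximum principle applied to a quantity of the form $t\eta^{p}\brs{\gU}^2 + A\,\tr_{G}\til{G}$, and the $k\geq 1$ cases via a point-picking/blowup contradiction in which the rescaled backgrounds flatten and the $k=0$ bound forces the limit metric to be constant, contradicting $f_{k,\infty}(0,0)=1$. Two small corrections to your outline: the trace term contributes the good term $-A\gL^{-1}\brs{\gU}^2$ (not $-c'A\brs{\N\gU}^2$) that absorbs the bad $C_1\brs{\gU}^2$ term, and the negative gradient terms in $(\dt-\gD)\brs{\gU}^2$ arise from a Bianchi-identity/perfect-square computation rather than from the unit-determinant structure of $G$, which plays no role in this argument.
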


As the quantity $f_k$ dominates the corresponding $C^{k+1}$ norm of the metric $g$ (cf. Lemma \ref{l:Test}), Theorem \ref{t:mainthm} recovers the estimates of (\cite{StreetsPCFBI} Theorem 1.7).  In fact, it has strengthened that estimate in several ways.  First, we have explicitly localized the estimate in terms of the geometry of a background metric $\til{g}$.  Futhermore, the quantity $\rho$ is equivalent to the curvature radius of $\til{g}$, allowing us to obtain regularity for $G$ in settings where the metric is collapsing.

\vskip 0.1in

\textbf{Acknowledgments:} The authors are supported by the NSF via DMS-1454854.

\section{Curvature of generalized metric}

All connections below are Chern connections unless otherwise stated; those that are decorated with a $g$ are of the classical metric on $T^{1,0}$ and those that are undecorated are of the generalized metric $G$ on $T^{1,0} \oplus \Lambda^{1,0}$. The same convention holds for curvatures and traces thereof.  Furthermore, we use tildes to denote quantities associated to a fixed background metric $\til{g}$ or generalized metric $\til{G}(\til{g}, \til{\gb})$.  We use Einstein summation convention wherein lower case indices are summed over elements of either the tangent or cotangent spaces and capital letters are summed over the entire generalized tangent space $T^{1,0} \oplus \Lambda^{1,0}$.  Also, choosing complex coordinates $z^i$, we let $Z^i = \frac{\del}{\del z^i}$ and $W^i = dz^i$.  Thus in particular the generalized metric $G$ in (\ref{f:G}) can be expressed in these components as follows:

\begin{gather} \label{f:Gcomponents}
\begin{split}
G_{Z^i \bZ^j} =&\ g_{i \bj} + \gb_{i k} \bgb_{\bj \bl} g^{\bl k}\\
G_{Z^i \bW^j} =&\ g^{\bj p} \gb_{ip}\\
G_{W^i \bZ^j} =&\ g^{\bp i} \bgb_{\bj \bp}\\
G_{W^i \bW^j} =&\ g^{\bj i}
\end{split}
\end{gather}

Using this, further elementary computations yield the inverse matrix
\begin{align*}
G^{-1} =&\ \left( 
\begin{matrix}
g^{\bj i} & \bgb_{\bj \bp} g^{\bp i}\\
\gb_{i p} g^{\bj p} & g_{k \bl} + \gb_{k p} \bgb_{\bl \bq} g^{\bq p}
\end{matrix}
\right).
\end{align*}
Or, in components,
\begin{align*}
G^{\bZ^j Z^i} =&\ g^{\bj i},\\
G^{\bZ^j W^i} =&\ \gb_{ip} g^{\bj p},\\
G^{\bW^j Z^i} =&\ \bgb_{\bj \bp} g^{\bp i},\\
G^{\bW^j W^i} =&\ g_{i \bj} + \gb_{i p} \bgb_{\bj \bq} g^{\bq p}.
\end{align*}

Before computing the curvature of $G$ below, we observe an important convention used throughout the paper.  We assume that the metric $\gw$ and torsion potential $\gb$ satisfy
\begin{align*}
\del \gw = \delb \gb.
\end{align*}
Given $\gw$, such $\gb$ can always be found at least locally.  More generally one can expect to solve globally for
\begin{align*}
\del \gw = \til{T} + \delb \gb,
\end{align*}
where $\til{T}$ denotes the Chern torsion with respect to some background metric.  All of the results below generalize to this case.

\begin{lemma} \label{l:GChernCurv} Given $(M^{2n}, J)$ a complex manifold and generalized metric $G$, the Chern curvature associated to $G$ is
\begin{gather}
\begin{split}
\Omega_{i \bj W^a \bW^b} =&\ -g^{\bb m}g^{\bn a}(\Omega^g_{i\bj m \bn}-g^{\bl o}T_{mo \bj}T_{\bn\bl i}), \\
\Omega_{i \bj Z^a \bW^b} =&\ - g^{\bb n}\N^g_{i}\N^g_{\bj}\beta_{an}, \\
\Omega_{i \bj Z^a \bZ^b} =&\ (\Omega^g)_{i\bj a\bb}-T_{ak\bj}T_{\bb \bl i}g^{\bl k}.
\end{split}
\end{gather}
\begin{proof} First recall the general formula for Chern curvature of a general Hermitian metric $h$ in complex coordinates,
\begin{align*}
\Omega_{i \bj \ga}^{\gb} =&\ - \del_{\bj} \gG_{i \ga}^{\gb} = - \del_{\bj} \left( \del_i h_{\ga \bgg} h^{\bgg \gb} \right) = - \del_i \del_{\bj} h_{\ga \bgg} h^{\bgg \gb} + \del_i h_{\ga \bgg} h^{\bgg \gd} \del_{\bj} h_{\gd \bge} h^{\bge \gb}.
\end{align*}
Lowering the index yields
\begin{align*}
\Omega_{i \bj \ga \bgb} =&\ - \Omega_{\bj i \ga}^{\gs} h_{\gs \bgb} = - h_{\ga \bgb,i\bj} + h_{\ga \bgg,i} h^{\bgg \gd} h_{\gd \bgb,\bj}.
\end{align*}

We note that the claimed formulas are invariant under transformations $\beta\mapsto \beta-\gamma$ where $\gamma$ is a holomorphic local section of $\Lambda^{2,0}$. To take advantage of this, let $p\in M$ and choose $\gamma=\beta_{ij}(p)dz^i\wedge dz^j$. Clearly $\gg$ is holomorphic, and after subtracting it we have forced $\beta(p)=0$. Thus, we can without loss of generality compute at a single point and suppose $\beta$ vanishes at that point.  In particular notice that $G^{\bar{Z} W} = 0$ at $p$.

With that in mind, let us begin by expanding
\begin{align*}
\Omega_{i \bj W^a \bW^b} =&\ -\del_i\del_{\bj}G_{W^a\bW^b}+\del_i G_{W^a \bga} G^{\bga \gb} \del_{\bj} G_{\gb \bW^b}\\
=&\ -\del_i\del_{\bj}G_{W^a\bW^b}+\left[ \del_i G_{W^a \bZ^k} G^{\bZ^k Z^l} \del_{\bj} G_{Z^l \bW^b} + \del_i G_{W^a \bW^k} G^{\bW^k W^l} \del_{\bj} G_{W^l \bW^b} \right]\\
=&\ -\del_i\del_{\bj}G_{W^a\bW^b}+ A_1 + A_2.
\end{align*}
Then, we can compute
\begin{align*}
\del_i\del_{\bj}G_{W^a\bW^b} =&\ g^{a\bb}_{,\bj i}\\
=&\ -\del_i(g^{\bn a}g^{\bb m}g_{m\bn,\bj})\\
=&\ -g^{\bn a}g^{\bb m}g_{m\bn,\bj i}+g^{\bn l}g^{\bo a}g_{l\bo,i}g^{\bb m}g_{m\bn,\bj}+g^{\bn a}g^{\bb l}g^{\bo m}g_{l\bo,i}g_{m\bn,\bj}\\
=&\ g^{\bn a}g^{\bb m}(\Omega^g_{i\bj m\bn}+g^{\bo l}g_{l\bn ,i}g_{m\bo,\bj}).
\end{align*}
Further,
\begin{align*}
A_1=&\ g^{\bk l}\del_i(g^{\bp a}\bgb_{\bk \bp})\del_{\bj}(g^{\bb n}\beta_{ln})= g^{\bk l}g^{\bp a}g^{\bb n}\beta_{ln,\bj}\bgb_{\bk \bp,i},\\
A_2 =&\ g_{l\bk}\del_ig^{a\bk}\del_{\bj}g^{l\bb}=g_{l\bk}g^{a\bn}g^{m\bk}g^{l\bo}g^{s\bb}g_{m\bn,i}g_{s\bo,\bj}=g^{a\bn}g^{m\bo}g^{s\bb}g_{m\bn,i}g_{s\bo,\bj}.
\end{align*}
Gathering these all up gives 
\begin{align*}
\Omega_{i\bj W^a \bW^b} =&\ -g^{\bn a}g^{\bb m}(\Omega^g_{i\bj m\bn}+g^{\bo l}g_{l\bn ,i}g_{m\bo,\bj})+g^{\bk l}g^{\bp a}g^{\bb n}\beta_{ln,\bj}\bgb_{\bk \bp,i}+g^{a\bn}g^{m\bo}g^{s\bb}g_{m\bn,i}g_{s\bo,\bj}\\
=&\ -g^{\bn a}g^{\bb m}(\Omega^g_{i\bj m\bn}-g^{\bk l}\beta_{lm,\bj}\bgb_{\bk \bn,i}).
\end{align*}
Next, we expand
\begin{align*}
\Omega_{i \bj Z^a \bW^b} =&\ -\del_i\del_{\bj}G_{Z^a\bW^b}+\del_i G_{Z^a \bga} G^{\bga \gb} \del_{\bj} G_{\gb \bW^b}\\
=&\ -\del_i\del_{\bj}G_{Z^a\bW^b}+\left[ \del_i G_{Z^a \bZ^k} G^{\bZ^k Z^l} \del_{\bj} G_{Z^l \bW^b} + \del_i G_{Z^a \bW^k} G^{\bW^k W^l} \del_{\bj} G_{W^l \bW^b} \right]\\
=&\ -\del_i\del_{\bj}G_{Z^a\bW^b}+ A_1 + A_2.
\end{align*}
Then we can compute
\begin{align*}
\del_i\del_{\bj}G_{Z^a\bW^b} =&\ \del_i\del_{\bj}(g^{\bb p}\gb_{ap})\\
=&\ \del_i(g^{\bb p}_{,\bj}\gb_{ap}+g^{\bb p}\gb_{ap,\bj})\\
=&\ g^{\bb p}_{,\bj}\gb_{ap,i}+g^{\bb p}_{,i}\gb_{ap,\bj}+g^{\bb p}\gb_{ap,\bj i}\\
=&\ g^{\bb p}\beta_{ap,\bj i}-g^{\bb \mu}g^{\bnu p}(g_{\mu\bnu,\bj}\beta_{ap,i}+g_{\mu\bnu,i}\beta_{ap,\bj}).
\end{align*}
And also,
\begin{align*}
A_1&=g^{\bk l}\del_i (g_{a\bk}+\gb_{a n}\bgb_{\bk \bm}g^{\bm n})\del_{\bj}(g^{\bb p}\gb_{l p})=g^{\bb p}g^{\bk l}g_{a\bk,i}\gb_{lp,\bj},\\
A_2&=g_{\bk l}\del_i(\gb_{ap}g^{\bk p})\del_{\bj}(g^{\bb l})=g_{\bk l}g^{\bk p}\gb_{ap,i}(-g^{\bb \mu}g^{\bnu l}g_{\mu\bnu,\bj})=-g^{\bb \mu}g^{\bnu p}\gb_{ap,i}g_{\mu\bnu,\bj}.
\end{align*}
Assembling these gives 
\begin{align*}
\Omega_{i\bj Z^a\bW^b}=&\ -g^{\bb p}\beta_{ap,\bj i}+g^{\bb \mu}g^{\bnu p}(g_{\mu\bnu,\bj}\beta_{ap,i}+g_{\mu\bnu,i}\beta_{ap,\bj})+(g^{\bb p}g^{\bk l}g_{a\bk,i}\gb_{lp,\bj}-g^{\bb \mu}g^{\bnu p}\gb_{ap,i}g_{\mu\bnu,\bj})\\
=&\ - g^{\bb p}\beta_{ap,\bj i}+g^{\bb\mu}g^{\bnu p}g_{\mu\bnu,i}\gb_{ap,\bj}+g^{\bb p}g^{\bk l}g_{a\bk, i}\gb_{lp,\bj}\\
=&\ - g^{\bb p}\N_{i}\N_{\bj}\beta_{ap},
\end{align*}
as claimed.  Lastly, we expand
\begin{align*}
\Omega_{i \bj Z^a \bZ^b} =&\ -\del_i\del_{\bj} G_{Z^a\bZ^b}+\del_i G_{Z^a \bga} G^{\bga \gb} \del_{\bj} G_{\gb \bZ^b}\\
=&\ -\del_i\del_{\bj} G_{Z^a\bZ^b}+\left[ \del_i G_{Z^a \bZ^k} G^{\bZ^k Z^l} \del_{\bj} G_{Z^l \bZ^b} + \del_i G_{Z^a \bW^k} G^{\bW^k W^l} \del_{\bj} G_{W^l \bZ^b} \right]\\
=&\ -\del_i\del_{\bj} G_{Z^a\bZ^b}+ A_1 + A_2.
\end{align*}
Computing gives
$$\del_i\del_{\bj} G_{Z^a\bZ^b}= g_{a\bb,i\bj}+(\gb_{ak,\bj}\bgb_{\bb\bl,i} + \gb_{ak,i}\bgb_{\bb\bl,\bj})g^{\bl k}.$$
And also,
\begin{align*}
A_1=&\ g^{\bk l}g_{a\bk,i}g_{l\bb,\bj},\\
A_2=&\ g_{\bk l}(\gb_{ap,i}g^{\bk p})(\bgb_{\bb \bn,\bj}g^{\bn l})=g^{\bn p}\gb_{ap,i}\bgb_{\bb \bn,\bj}.
\end{align*}
Putting this together gives:
\begin{align*}
\Omega_{i \bj Z^a \bZ^b}&=-(g_{a\bb,i\bj}+(\gb_{ak,\bj}\bgb_{\bb\bl,i} + \gb_{ak,i}\bgb_{\bb\bl,\bj})g^{\bl k})+(g^{\bk l}g_{a\bk,i}g_{l\bb,\bj}+g^{\bn p}\gb_{ap,i}\bgb_{\bb \bn,\bj})\\
&=(-g_{a\bb,i\bj}+g^{\bk l}g_{a\bk,i}g_{l\bb,\bj})-\gb_{ak,\bj}\bgb_{\bb\bl,i}g^{\bl k}\\
&=(\Omega^g)_{i\bj a\bb}-\gb_{ak,\bj}\bgb_{\bb\bl,i}g^{\bl k},
\end{align*}
as claimed.
\end{proof}
\end{lemma}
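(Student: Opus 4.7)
The proof is a direct computation from the universal formula
\[
\Omega_{i\bj\ga\bgb} \;=\; -\,h_{\ga\bgb,i\bj} \;+\; h_{\ga\bgg,i}\, h^{\bgg\gd}\, h_{\gd\bgb,\bj}
\]
for the Chern curvature of a Hermitian metric $h$, applied with $h=G$ and with $(\ga,\bgb)$ running over the block structure $T^{1,0}\oplus \Lambda^{1,0}$. The first obstacle is the size of the expression: the middle term is a sum over four blocks of $G^{-1}$ and, once expanded using (\ref{f:Gcomponents}), each block of $\Omega$ becomes a long polynomial in $\del g$, $\del\gb$, $\del^2 g$, $\del^2\gb$ whose collapse to the clean formulas in the statement is not transparent.

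To tame this, the plan is to first exploit a gauge freedom. The three right-hand sides depend only on $g$, on its Chern curvature $\Omega^g$, on the torsion $T = d^c\gw$ (via $\delb\gb = \del\gw$), and on covariant derivatives of $\gb$, all of which are unchanged when $\gb$ is shifted by a local holomorphic section $\gamma$ of $\Lambda^{2,0}$. Granting this invariance (a short independent check), we may compute pointwise at an arbitrary $p\in M$ and subtract $\gamma = \gb_{ij}(p)\,dz^i\wedge dz^j$ to arrange $\gb(p)=0$. The crucial consequence is that at $p$ the off-diagonal block $G^{\bZ^j W^i}$ vanishes, so in the middle term of the curvature formula only the pure $\bZ^k Z^l$ sum (call it $A_1$) and the pure $\bW^k W^l$ sum (call it $A_2$) survive; the cross contributions disappear.

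With $\gb(p)=0$ in place, each of the three components is attacked by the same three-step procedure: expand $\del_i\del_{\bj} G_{\bullet\bar{\bullet}}$ using (\ref{f:Gcomponents}), expand $A_1$ and $A_2$ using the explicit formulas for the blocks of $G^{-1}$, and assemble. For $\Omega_{i\bj W^a\bW^b}$, one recognises an $\Omega^g_{i\bj m\bn}$ inside the expansion of $\del_i\del_{\bj} g^{\bn a}$; the accompanying quadratic-in-$\del g$ remainder cancels exactly with $A_2$, while $A_1$ reproduces the claimed torsion contraction $g^{\bl o}T_{mo\bj}T_{\bn\bl i}$ through $\delb\gb = \del\gw$. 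For $\Omega_{i\bj Z^a\bW^b}$, the three pieces combine to reconstitute $\N^g_i\N^g_{\bj}\gb_{ap}$ from $\del_i\del_{\bj}\gb_{ap}$ with Christoffel corrections extracted from $\del g$. For $\Omega_{i\bj Z^a\bZ^b}$, expansion at $p$ produces $\Omega^g_{i\bj a\bb}$ together with a mixed $(\del\gb)(\delb\bgb)$ term which, after incorporating $A_1$ and $A_2$, collapses to the claimed torsion contraction $T_{ak\bj}T_{\bb\bl i}g^{\bl k}$.

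The only genuinely delicate step is the $\bW W$ case, where one must simultaneously verify a cubic cancellation of $(\del g)^2 g^{-1}$ terms between $\del_i\del_{\bj}G_{W^a\bW^b}$ and $A_2$, and correctly identify the surviving $\del\gb\cdot\delb\bgb$ piece with $T\cdot T$ via $\delb\gb=\del\gw$. The other two blocks then follow by the same bookkeeping.
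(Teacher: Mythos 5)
Your proposal follows essentially the same route as the paper's proof: the universal formula $\Omega_{i\bj\ga\bgb} = -h_{\ga\bgb,i\bj} + h_{\ga\bgg,i}h^{\bgg\gd}h_{\gd\bgb,\bj}$ applied to $G$, the gauge normalization $\gb(p)=0$ via subtraction of the constant holomorphic form $\gb_{ij}(p)\,dz^i\wedge dz^j$ so that the off-diagonal blocks of $G^{-1}$ drop out of the middle term, and then the block-by-block expansion with the cancellation of the $(\del g)^2 g^{-1}$ remainder against $A_2$ and the identification of the $\del\gb\cdot\delb\bgb$ pieces with the torsion contractions via $\delb\gb=\del\gw$. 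The decomposition into $A_1$, $A_2$ and the points at which the cancellations occur are exactly those of the paper, so the outline is correct and no genuinely new ingredient is needed beyond carrying out the bookkeeping you describe.
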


\begin{lemma} \label{l:GChernS} Given $(M^{2n}, J)$ a complex manifold and generalized metric $G$, the tensor $S$ associated to $G$ satisfies
	\begin{gather}
	\begin{split}
	S_{W^a \bW^b} =&\ -g^{\bb m}g^{\bn a}(S^g_{m\bn}-T^2_{m \bn}),\ \\
	S_{Z^a \bW^b} =&\ - g^{\bb n}\Delta_g\gb_{an},\ \\
	S_{Z^a \bZ^b} =&\ S^g_{a\bb}-T^2_{a\bb},
	\end{split}
	\end{gather}
where here and below we refer to $ T_{il\bn}\bar{T}_{\bj \bk m}g^{\bk l}g^{\bn m}$ as $T^2_{i\bj}$.
	\begin{proof} Taking the trace in Lemma \ref{l:GChernCurv} proves the result.
	\end{proof}
\end{lemma}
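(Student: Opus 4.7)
The plan is to read off each component of $S_{A\bar B} = g^{\bar j i}\Omega_{i\bar j A\bar B}$ directly from the three formulas for $\Omega$ proved in Lemma \ref{l:GChernCurv}. No further geometric input is needed: the statement is just the contraction of the Chern curvature of $G$ against the metric $g$ on the base, applied to each of the three blocks in $\End(T^{1,0}\oplus\Lambda^{1,0})$.

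I would handle the blocks in increasing order of difficulty. The $Z^a\bar Z^b$ block is immediate: contract
\begin{align*}
g^{\bar j i}\bigl(\Omega^g_{i\bar j a\bb}-T_{ak\bar j}\bar T_{\bb\bar l i}g^{\bar l k}\bigr)
\end{align*}
to get $S^g_{a\bb}-T^2_{a\bb}$, where one verifies that the relabeling of summation indices $(k,i,\bar l,\bar j)$ in $g^{\bar j i}g^{\bar l k}T_{ak\bar j}\bar T_{\bb\bar l i}$ matches the convention $T^2_{i\bj}=T_{il\bn}\bar T_{\bj\bk m}g^{\bk l}g^{\bn m}$ stated in the lemma. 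The $Z^a\bar W^b$ block is also straightforward: the factor $g^{\bb n}$ passes through the trace, and $g^{\bar j i}\nabla^g_i\nabla^g_{\bar j}$ is, by definition, the Chern Laplacian $\Delta_g$ acting on $\beta_{an}$.

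The only block requiring slightly more care is $W^a\bar W^b$. Here the formula from Lemma \ref{l:GChernCurv} has two $g^{-1}$ factors $g^{\bar n a}g^{\bb m}$ already pulled out, so tracing with $g^{\bar j i}$ gives
\begin{align*}
S_{W^a\bar W^b}= -g^{\bar n a}g^{\bb m}\Bigl(g^{\bar j i}\Omega^g_{i\bar j m\bar n}-g^{\bar j i}g^{\bar l o}T_{mo\bar j}T_{\bar n\bar l i}\Bigr),
\end{align*}
and one identifies the first bracketed term as $S^g_{m\bar n}$ and the second, after the same index relabeling used in the $Z\bar Z$ case, as $T^2_{m\bar n}$.

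The expected obstacle is entirely bookkeeping — keeping track of which indices are barred, which contractions are with $g$ versus $g^{-1}$, and making sure the positions of the conjugate torsion factors agree with the convention defining $T^2$. There is no analytic content beyond the curvature computation already completed in Lemma \ref{l:GChernCurv}; once those identifications are made the three formulas in the statement follow at once.
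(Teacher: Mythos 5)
Your proposal is correct and is exactly the paper's proof: the lemma follows by contracting each block of the curvature formulas in Lemma \ref{l:GChernCurv} with $g^{\bar j i}$, and the index relabelings you describe do match the stated convention for $T^2$. Nothing further is needed.
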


\section{Evolution Equations} \label{s:eveqns}

In this section we establish formula (\ref{f:Gdot}), the evolution equation for $G$ under pluriclosed flow.
Before we begin computing, it will be useful to establish some B-field transformations of various quantities.

\begin{lemma} \label{l:BfieldChris}
	Given $(M^{2n}, J)$ a complex manifold, $\gg \in \Lambda^{2,0}$ and a generalized Hermitian metric $G$, the Christoffel symbols of the generalized metric $\tilde{G} = e^{-\bgg}Ge^{\gg}$ are given by
	$$\tilde{\gG}=e^{-\gg}G^{-1}\begin{pmatrix}0&0\\-\partial\bgg &0\end{pmatrix}G e^{\gg}+e^{-\gg}\gG e^{\gg} + \begin{pmatrix}
	0&0\\\partial \gg & 0
	\end{pmatrix}.$$
	In particular, if $\gg$ is constant, then
	\begin{align*}
		\tilde{\Omega} =&\ e^{-\gg}\Omega e^{\gg}\\
		S^{\tilde{G}} =&\ e^{-\bgg}S^{G}e^{\gg}.
	\end{align*}
\end{lemma}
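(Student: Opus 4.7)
The proof is a direct matrix computation whose engine is the nilpotency of $\gamma$ and $\bar{\gamma}$ regarded as block endomorphisms of $T^{1,0}\oplus \Lambda^{1,0}$.

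First, I would record the preliminary algebraic facts. Writing $\gamma$ as the strictly lower-triangular block $\begin{pmatrix}0 & 0\\\gamma & 0\end{pmatrix}$ (and similarly for $\bar{\gamma}$), nilpotency of order two gives $e^{\pm\gamma} = I\pm\gamma$ and $e^{\pm\bar{\gamma}} = I\pm\bar{\gamma}$, with $\partial e^{\pm\gamma} = \pm\partial\gamma$ and analogously for $\bar{\gamma}$. Because $\partial\gamma$ and $\partial\bar{\gamma}$ inherit the same strictly lower-triangular block pattern, one checks directly that
\begin{equation*}
\gamma\cdot\partial\gamma \;=\;0, \qquad \bar{\gamma}\cdot\partial\bar{\gamma}\;=\;0,
\end{equation*}
and therefore $e^{\bar{\gamma}}(-\partial\bar{\gamma}) = -\partial\bar{\gamma}$ and $e^{-\gamma}\partial\gamma = \partial\gamma$. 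These identities are what make the proof short.

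Second, I would invoke the matrix Chern Christoffel formula applied to $\tilde{G} = e^{-\bar{\gamma}}Ge^{\gamma}$. Using $\tilde{G}^{-1} = e^{-\gamma}G^{-1}e^{\bar{\gamma}}$ together with the Leibniz rule
\begin{equation*}
\partial \tilde{G} \;=\; -\partial\bar{\gamma}\cdot G\cdot e^{\gamma} \;+\; e^{-\bar{\gamma}}(\partial G)e^{\gamma} \;+\; e^{-\bar{\gamma}}G\cdot\partial\gamma,
\end{equation*}
one forms the product yielding $\tilde{\Gamma}$. The middle contribution reduces immediately to $e^{-\gamma}\Gamma e^{\gamma}$ because the interior $e^{\pm\bar{\gamma}}$ factors cancel and $\Gamma$ is recognized as the Chern Christoffel matrix of $G$. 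The outer two terms collapse under the nilpotency identities above, leaving exactly $e^{-\gamma}G^{-1}\begin{pmatrix}0 & 0\\-\partial\bar{\gamma} & 0\end{pmatrix}Ge^{\gamma}$ and $\begin{pmatrix}0 & 0\\\partial\gamma & 0\end{pmatrix}$. Summing gives the stated expression for $\tilde{\Gamma}$.

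For the ``in particular'' corollary with $\gamma$ constant, $\partial\gamma$ and $\partial\bar{\gamma}$ both vanish as forms, so the first and third terms disappear and $\tilde{\Gamma} = e^{-\gamma}\Gamma e^{\gamma}$. Taking $\bar{\partial}$ and using $\bar{\partial}\gamma = 0$ gives $\tilde{\Omega} = e^{-\gamma}\Omega e^{\gamma}$. For $S$, recall that $S$ is the $g$-trace of $\Omega$ with its endomorphism index lowered by $G$; the $g^{-1}$-trace commutes with the $B$-field (since $g$ itself is unchanged), and lowering by $\tilde{G} = e^{-\bar{\gamma}}Ge^{\gamma}$ instead of $G$ converts the right-most $e^{\gamma}$ on the endomorphism side of $\tilde{\Omega}$ into the factor $e^{-\bar{\gamma}}$ appearing on the left, yielding $S^{\tilde{G}} = e^{-\bar{\gamma}}S^{G}e^{\gamma}$.

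The chief obstacle is the block-matrix bookkeeping: one must commit to a specific convention for which index of $G$ each exponential $e^{\pm\gamma}$ or $e^{\pm\bar{\gamma}}$ acts on, and verify that the chosen convention is compatible with the form of the Chern Christoffel formula in use. Once this is pinned down the algebra is a short and mechanical chain of cancellations driven entirely by the nilpotency of $\gamma$, which kills every cross term and leaves precisely the three contributions of the lemma.
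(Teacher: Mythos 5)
Your proposal is correct and follows essentially the same route as the paper: apply the Leibniz rule to $\tilde{\gG}=\tilde{G}^{-1}\del\tilde{G}$ and use the order-two nilpotency of the block matrices $\gg,\bgg$ (so that $e^{\bgg}\del e^{-\bgg}=-\del\bgg$ and $e^{-\gg}\del e^{\gg}=\del\gg$) to collapse the outer terms. Your explicit justification of the identities $\gg\cdot\del\gg=0$, $\bgg\cdot\del\bgg=0$ and your remark on the transformation of $S$ are slightly more detailed than the paper's proof, which leaves both essentially implicit.
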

\begin{proof}
	\begin{align*}
	\tilde{\gG}=&\ \tilde{G}^{-1}\partial \tilde{G}\\
	=&\ (e^{-\bgg} G e^{\gg})^{-1}\partial (e^{-\bgg}Ge^{\gg})\\
	=&\ e^{-\gg}G^{-1}(e^{\bgg}\partial e^{-\bgg})G e^{\gg}+e^{-\gg}(G^{-1}\partial G)e^{\gg} + e^{-\gg}\partial e^{\gg}\\
	=&\ e^{-\gg}G^{-1}(e^{\bgg}\partial e^{-\bgg})G e^{\gg}+e^{-\gg}\gG e^{\gg} + e^{-\gg}\partial e^{\gg}\\
	=&\ e^{-\gg}G^{-1}\begin{pmatrix}0&0\\-\partial\bgg &0	\end{pmatrix}G e^{\gg}+e^{-\gg}\gG e^{\gg} + \begin{pmatrix}
	0&0\\\partial \gg & 0\\
	\end{pmatrix}
	\end{align*}
	Given this, in the case $\gg$ is constant we have $\til{\gG} = e^{-\gg} \gG e^{\gg}$, and differentiating again we have
	\begin{align*}
	\til{\Omega} = - \delb \til{\gG} = - \delb e^{-\gg} \gG e^{\gg} = - e^{-\gg} \delb \gG e^{\gg} = e^{-\gg} \Omega e^{\gg}.
	\end{align*}
\end{proof}

\begin{prop} \label{p:Gevol} Given $(M^{2n}, J)$ and $(\gw, \gb)$ a solution to pluriclosed flow (\ref{f:PCF}), the associated generalized metric $G$ satisfies
	\begin{align}\label{eq:Gevol}
	\dt G =&\ - S.
	\end{align}	
\end{prop}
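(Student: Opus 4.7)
The plan is to verify (\ref{eq:Gevol}) pointwise at an arbitrary $p \in M$ by matching the four blocks of $\dt G$, computed from (\ref{f:Gcomponents}) and the flow equations (\ref{f:PCF}), against the blocks of $-S$ provided by Lemma \ref{l:GChernS}. The first step is a gauge reduction to the case $\gb(p) = 0$. Fix $p$ and let $\gamma := \gb_{ij}(p)\, dz^i \wedge dz^j$, a $(2,0)$-form with constant coefficients; replacing $\gb$ by $\gb - \gamma$ preserves $\delb \gb = \del \gw$ and so produces another pluriclosed flow solution with the same underlying $\gw$. By the constant-$\gamma$ case of Lemma \ref{l:BfieldChris}, both $G$ and $S$ transform equivariantly by conjugation with $e^{\pm \gamma}$, so it is enough to establish (\ref{eq:Gevol}) at $p$ under the normalization $\gb(p) = 0$.

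Under this normalization, every term in $\dt G$ containing a bare factor of $\gb$ or $\bgb$ drops out at $p$, and (\ref{f:Gcomponents}) collapses to
\begin{align*}
\dt G_{Z^i \bZ^j}\big|_p =&\ \dt g_{i \bj}, \\
\dt G_{Z^i \bW^j}\big|_p =&\ g^{\bj p}\, \dt \gb_{ip}, \\
\dt G_{W^i \bW^j}\big|_p =&\ -g^{\bj a} g^{\bb i}\, \dt g_{a \bb}.
\end{align*}
Inserting (\ref{f:PCF}) converts these expressions to $\rho_B^{1,1}$ and $\rho_B^{2,0}$ with the appropriate index placements. Comparing to the three blocks of $-S$ in Lemma \ref{l:GChernS}, the identity $\dt G = -S$ at $p$ is equivalent to the two Bismut-to-Chern identities
\begin{align*}
(\rho_B)^{1,1}_{i \bj} =&\ S^g_{i \bj} - T^2_{i \bj}, \\
(\rho_B)^{2,0}_{ij}\big|_p =&\ -\Delta_g \gb_{ij}\big|_p.
\end{align*}

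The first of these is the classical relation between the Bismut and Chern Ricci forms on a pluriclosed manifold and is well known from the pluriclosed flow literature (\cite{PCF, StreetsPCFBI}). The main obstacle is the $(2,0)$ identity. I would derive it by expanding $\rho_B = \tr \Omega^B$ using $\N^B = D + \tfrac{1}{2} g^{-1} H$ and $H = d^c \gw$, and then using the coordinate identity $T_{k i \bj} = \del_{\bj} \gb_{k i}$, a direct consequence of $\del \gw = \delb \gb$, to trade $H$-derivatives for second derivatives of $\gb$. At the normalized point $p$ the connection-correction terms collapse and $(\rho_B)^{2,0}_{ij}$ reduces to $-g^{\bk l} \del_l \del_{\bk} \gb_{ij} = -\Delta_g \gb_{ij}$, in exact agreement with the $\Omega_{i \bj Z^a \bW^b}$ formula already extracted in Lemma \ref{l:GChernCurv}. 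Assembling the three block matchings then yields $\dt G = -S$ at $p$, and since $p$ was arbitrary the proposition follows.
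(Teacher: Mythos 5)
Your proposal is correct and follows essentially the same route as the paper: gauge to $\gb(p)=0$ using the constant-$\gamma$ case of Lemma \ref{l:BfieldChris}, compute the blocks of $\dt G$ from (\ref{f:Gcomponents}) and (\ref{f:PCF}), and match them against Lemma \ref{l:GChernS} via the identities $\rho_B^{1,1}=S^g-T^2$ and $(\rho_B^{2,0})_{ij}=-\Delta_g\gb_{ij}$. The only divergence is in sourcing the $(2,0)$ identity: the paper quotes $\rho_B^{2,0}=\del\delb^*\gw$ from the type decomposition of $\rho_B-\rho_C=dd^*\gw$ (Ivanov--Papadopoulos) and then converts $\del\delb^*\gw$ to $-\Delta_g\gb$ using $\delb\gb=\del\gw$, whereas you propose expanding $\tr\Omega^B$ directly; either works, but your phrase ``the connection-correction terms collapse'' is too quick --- normalizing $\gb(p)=0$ does not kill the Christoffel symbols of $g$ at $p$ (the first derivatives of $\gb$, i.e.\ the torsion, survive), so the Laplacian you land on must be the covariant one $g^{\bl k}\N^g_k\N^g_{\bl}\gb_{ij}$ appearing in Lemma \ref{l:GChernCurv}, not the raw $g^{\bl k}\del_l\del_{\bl}\gb_{ij}$, unless you additionally work in coordinates normal for $g$ at $p$.
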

\begin{proof}
	We will use the following equations
	$$\rho_B^{1,1}=S^g-T^2 \qquad \rho_B^{2,0}=\partial\bar{\partial}^*\omega$$
	These can be derived in using the Bianchi identity for the Chern curvature and type decomposing the identity $\rho_B-\rho_C=dd^*\omega$ (cf. \cite{IvanovPapa}).
	
	Furthermore, arguing as above by applying Lemma \ref{l:BfieldChris}, we can without loss of generality compute at a space-time point where $\beta$ vanishes.  Using this we can compute
	\begin{align*}
	\dt G_{Z^i\bZ^j} =& \dt g_{i\bj} + \dt (\gb_{ik}\bgb_{\bj \bl} g^{\bl k})\\
	=& -(\rho_B^{1,1})_{i\bj}-\left((\rho_B^{2,0})_{ik}\bgb_{\bj \bl} + \gb_{ik}(\overline{\rho_B^{2,0}})_{\bj \bl}\right)g^{\bl k} + \gb_{ik}\bgb_{\bj \bl}(\rho_B^{1,1})_{\mu \bnu}g^{\bl\mu}g^{\bnu k}\\
	=& -(S^g_{i\bj}-T^2_{i\bj})\\
	=&-S_{Z^i\bZ^j}.
	\end{align*}
	
	Also,
	\begin{align*}
	\dt G_{Z^i \bW^{j}} =&\ \dt (\gb_{ip}g^{\bj p})\\
	=&\ g^{\bj p}\dt \gb_{ip} - \gb_{ip} g^{\bj m}g^{\bn p}\dt g_{m \bn}\\
	=&\ -g^{\bj p}(\rho_B^{2,0})_{i p}\\
	=&\ -g^{\bj p}(\partial \bar{\partial}^*\omega_{ip})\\
	=&\ g^{\bj p}\gD_g \beta_{i p}\\
	=&\ -S_{Z^i\bW^j}.
	\end{align*}
	
	Further,
	\begin{align*}
	\dt G_{W^i\bW^{j}} =&\ \dt g^{\bj i}\\
	=&\ g^{\bj m}g^{\bn i}(\rho^{1,1}_B)_{m \bn}\\
	=&\ g^{\bj m}g^{\bn i}(S^g_{m\bn}-T^2_{m\bn})\\
	=&\ -S_{W^i\bW^{j}}^G.
	\end{align*}
\end{proof}

\begin{prop} \label{p:gUevol} Given $(M^{2n}, J)$ and $(\gw, \gb)$ a solution to pluriclosed flow (\ref{f:PCF}), with associated generalized metric $G$ and background generalized metric $\til{G}$, one has
	\begin{align*}
\left(\dt - \gD\right)|\gU|^2 =-|\N\gU|^2-|\bar{\N}\gU+T\cdot\gU|^2 + T*\gU*\tilde{\Omega}+ \gU *\bgU * \tilde{\Omega} + \gU * \tilde{\N}\tilde{\Omega}.
	\end{align*}
	\begin{proof} 		
		A general calculation for the variation of the Chern connection associated to a Hermitian metric yields
		\begin{align*}
		\dt \gU_{iA}^B = \N_i \dt G_{A}^{B}.
		\end{align*}
		Specializing this result using Proposition \ref{p:Gevol} we thus have
		\begin{equation} \label{f:C3calc20}
		\dt \gU_{i A}^{B} = - \N_i S_{A}^{B}.
		\end{equation}
		Using this, we compute
		\begin{gather} \label{f:C3calc25}
		\begin{split}
		\gD \gU_{iA}^B &= g^{\bk l}\N_{l}\N_{\bk} \gU_{iA}^B\\
		&= g^{\bk l}\N_l \left(\Omega_{\bk i A}^B-\widetilde{\Omega}_{\bk i A}^B\right)\\
		&=g^{\bk l}\N_i\Omega_{\bk l A}^B+g^{\bk l}T^p_{il}\Omega_{\bk p A}^B +g^{\bk l}\N_l\widetilde{\Omega}_{i \bk A}^B\\
		&=\partial_t \gU_{iA}^B-g^{\bk l}T^p_{il}\Omega_{p \bk A}^B +g^{\bk l}\widetilde{\N}_l\widetilde{\Omega}_{i \bk A}^B +g^{\bk l}\left(\gU_{l i}^q \widetilde{\Omega}_{q\bk A}^B +\gU_{l A}^D \widetilde{\Omega}_{i\bk D}^B - \gU_{l D}^B \widetilde{\Omega}_{i\bk A}^D \right).\\
		\end{split}
		\end{gather}
		Next we observe the commutation formula
		\begin{gather} \label{f:C3calc30}
		\begin{split}
		\bar{\gD} \bar{\gU}_{\bj \bA}^{\bB} =&\ g^{\bl k} \N_{\bl} \N_k \bgU_{\bj \bA}^{\bB}\\
		=&\ g^{\bl k} \left[ \N_k \N_{\bl} \bgU_{\bj \bA}^{\bB} - (\Omega^g)_{k \bl \bj}^{\bq} \bar{\gU}_{\bq \bA}^{\bB} - \Omega_{k \bl \bA}^{\bC} \bar{\gU}_{\bj \bC}^{\bB} + \Omega_{k \bl \bC}^{\bB} \bar{\gU}_{\bj \bA}^{\bC} \right]\\
		=&\ \gD \bar{\gU}_{\bj \bA}^{\bB} - (S^g)_{\bj}^{\bq} \bgU_{\bq \bA}^{\bB} - S_{\bA}^{\bC} \bar{\gU}_{\bj \bC}^{\bB} +S_{\bC}^{\bB} \bar{\gU}_{\bj \bA}^{\bC}.
		\end{split}
		\end{gather}
		Combining (\ref{f:C3calc20})-(\ref{f:C3calc30}), the evolution equations of Proposition \ref{p:Gevol} and the pluriclosed flow equation  we obtain
		\begin{align*}
		\dt \brs{\gU}^2_{g^{-1},G^{-1},G} =&\ \dt \left[ g^{\bj i} G^{\bC A} G_{B \bD} \gU_{i A}^{B} \bgU_{\bj \bC}^{\bD} \right]\\
		=&\ - g^{\bj k} \left( - S^g_{k \bl} + T^2_{k \bl} \right) g^{\bl i} G^{\bC A} G_{B \bD} \gU_{i A}^{B} \bgU_{\bj \bC}^{\bD} - g^{\bj i} G^{\bC M} \left( - S^G_{M \bL} \right) G^{\bL A} \gU_{i A}^{B} \bgU_{\bj \bC}^{\bD}\\
		&\ + g^{\bj i} G^{\bC A} \left( - S^G_{B \bD} \right) \gU_{i A}^{B} \bgU_{\bj \bC}^{\bD} + g^{\bj i} G^{\bC A} G_{B \bD} \left(\gD \gU_{iA}^B + g^{\bk l}(T^g)_{il}^p\Omega_{p\bk A}^B\right.\\
		&\ \left. -g^{\bk l}\widetilde{\N}_l\widetilde{\Omega}_{i\bk A}^B-g^{\bk l}\left[\gU_{l i}^q \widetilde{\Omega}_{q\bk A}^B +\gU_{l A}^M \widetilde{\Omega}_{i\bk M}^B - \gU_{l M}^B \widetilde{\Omega}_{i\bk A}^M \right]\right) \bgU_{\bj \bC}^{\bD}\\
		&\ + g^{\bj i} G^{\bC A} G_{B \bD} \gU_{i A}^{B} \left(\bar{\gD} \bgU_{\bj \bC}^{\bD} - g^{\bk l}(\bar{T}^g)_{\bk\bj}^{\bq}\Omega_{l\bq \bC}^{\bD}+g^{\bk l}\widetilde{\N}_{\bk}\widetilde{\Omega}_{l \bj \bC}^{\bD}\right.\\
		&\ \left.-g^{\bk l}\left[\bgU_{\bk \bj}^{\bq} \widetilde{\Omega}_{\bq l \bC}^{\bD} +\bgU_{\bk \bC}^{\bL} \widetilde{\Omega}_{\bj l \bL}^{\bD} - \bgU_{\bk \bL}^{\bD} \widetilde{\Omega}_{\bj l \bC}^{\bL} \right] \right). \\
		\end{align*}
		We furthermore compute
		\begin{align*}
		\gD|\gU|^2_{g^{-1},G^{-1},G}&= g^{\bk l}g^{\bj i}G^{\bC A}G_{B \bD}\N_{l}\N_{\bk} (\gU_{i A}^{B} \bgU_{\bj \bC}^{\bD})\\
		&= \langle\gD\gU,\bar{\gU} \rangle + \langle \gU, \gD\bar{\gU}\rangle + |\N \gU|^2+ |\bar{\N}\gU|^2 \\
		&=(S^g)_{\bj}^{\bq}\bar{\gU}_{\bq \bC}^{\bD}\gU_{iA}^{B} g^{\bj i} G^{\bC A} G_{B \bD} +S_{\bC}^{\bar{\Lambda}}\bar{\gU}_{\bj \bar{\Lambda}}^{\bD}\gU_{iA}^B g^{\bj i}G^{\bC A}G_{B\bD}\\
		&\quad - S_{\bar{\Lambda}}^{\bD}\bar{\gU}_{\bj \bC}^{\bar{\Lambda}}\gU_{iA}^B g^{\bj i}G^{\bC A}G_{B \bD} + \langle\gD\gU,\bar{\gU} \rangle + \langle \gU, \bar{\gD}\bar{\gU} \rangle+ |\N \gU|^2+ |\bar{\N}\gU|^2.
		\end{align*}
		Subtracting the two equations above yields
		\begin{align*}
		\left(\dt - \gD\right)|\gU|^2 =&\ -|\N\gU|^2-|\bar{\N}\gU|^2\\
		&\ + g^{\bj i}g^{\bk l}G^{\bC A}G_{B\bD}\left( -(T^2)_{i\bk}\gU_{l A}^{B}\bgU_{\bj \bC}^{\bD} +T_{il}^p\Omega_{p\bk A}^B\bgU_{\bj \bC}^{\bD}-\bar{T}_{\bk\bj}^{\bq}\Omega_{l\bq\bC}^{\bD}\gU_{iA}^{B} \right.\\
		&\ \left. +\gU_{iA}^B\tilde{\N}_{\bk}\tilde{\Omega}_{l\bj\bC}^{\bD}-\bgU_{\bj \bC}^{\bD}\tilde{\N}_{l}\tilde{\Omega}_{i\bk A}^B \right.\\
		&\ \left.-\gU_{l i}^q\bgU_{\bj \bC}^{\bD} \widetilde{\Omega}_{q\bk A}^B  -\gU_{l A}^M \bgU_{\bj \bC}^{\bD} \widetilde{\Omega}_{i\bk M}^B + \gU_{l M}^B\bgU_{\bj \bC}^{\bD} \widetilde{\Omega}_{i\bk A}^M \right.\\
		&\ \left. -\gU_{iA}^B\bgU_{\bk \bj}^{\bq} \widetilde{\Omega}_{\bq l \bC}^{\bD} -\gU_{iA}^B\bgU_{\bk \bC}^{\bL} \widetilde{\Omega}_{\bj l \bL}^{\bD} + \gU_{iA}^B\bgU_{\bk \bL}^{\bD} \widetilde{\Omega}_{\bj l \bC}^{\bL} \right).
		\end{align*}
		Then, observing that the second through fifth terms above form a perfect square we arrive at
		$$\left(\dt - \gD\right)|\gU|^2 =-|\N\gU|^2-|\bar{\N}\gU+T\cdot\gU|^2 + T*\gU*\tilde{\Omega}+ \gU *\bgU * \tilde{\Omega} + \gU * \tilde{\N}\tilde{\Omega},$$
		as claimed.
	\end{proof}
\end{prop}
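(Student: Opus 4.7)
The plan is to compute $\dt |\gU|^2$ and $\gD |\gU|^2$ separately and subtract, with the key inputs being the evolution $\dt G = -S$ from Proposition \ref{p:Gevol} and a Bianchi-type manipulation that converts $\N \Omega$ into $\dt \gU$ plus background and torsion corrections. To begin I would derive the evolution equation for $\gU$ itself. Since $\tilde{G}$ is time-independent, $\dt \gU$ is governed by the standard variational formula for a Chern connection, namely $\dt \gU_{iA}^B = \N_i (\dt G)_A^B$. Combined with $\dt G = -S$ this gives $\dt \gU_{iA}^B = -\N_i S_A^B$, which is the starting identity.

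Next I would compute $\gD \gU$ using the observation that $\bar{\N}\gU$ is essentially the difference of curvatures, $\N_{\bk} \gU_{iA}^B = \Omega_{\bk i A}^B - \tilde{\Omega}_{\bk i A}^B$. Applying $\N_l$ and tracing with $g^{\bk l}$, I would invoke the second Bianchi identity for the Chern connection to swap $\N_l \Omega_{\bk i}$ with $\N_i \Omega_{\bk l}$ up to a torsion correction $T^p_{il} \Omega_{\bk p}$, and then recognize $S = g^{\bk l} \Omega_{l\bk}$ inside, producing the term $\N_i S_A^B = -\dt \gU_{iA}^B$. The covariant derivative of the background curvature is rewritten in terms of $\tilde{\N}$, since $\N - \tilde{\N} = \gU$, which creates the quadratic $\gU * \tilde{\Omega}$ and linear $\tilde{\N}\tilde{\Omega}$ lower-order terms.

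Then I would expand $\dt |\gU|^2$, where the norm uses the mixed metric $g^{-1} \otimes G^{-1} \otimes G$. Besides the $\dt \gU$ and $\dt \bgU$ pieces, one must differentiate the metrics themselves: $\dt g^{-1}$ contributes $\rho_B^{1,1} = S^g - T^2$ terms, while $\dt G^{\pm 1}$ contributes $\mp S$ terms from Proposition \ref{p:Gevol}. In parallel I would expand $\gD |\gU|^2$ by distributing $g^{\bk l} \N_l \N_{\bk}$ through the product, obtaining $|\N \gU|^2 + |\bar{\N} \gU|^2$ together with cross terms $\langle \gD \gU, \bgU\rangle + \langle \gU, \gD \bgU\rangle$. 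To relate $\gD \bgU$ to $\bar{\gD} \bgU$ I would use the commutation formula $[\N, \bar{\N}]\bgU = \Omega \cdot \bgU$ displayed in (\ref{f:C3calc30}), which contributes $S$-type corrections matching exactly the ones that appear from the $\dt$ side.

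Finally, I would subtract. By design, the contributions involving $S^g$, $S$, and the second-derivative cross terms cancel, leaving $-|\N\gU|^2 - |\bar{\N}\gU|^2$ plus the cubic and curvature-times-torsion remainders. The main obstacle, and the essential structural observation, is recognizing the perfect square hiding in the output: the $-|\bar{\N}\gU|^2$ term combines with the cross terms $T * \gU * \bgU$ coming from the torsion corrections to the Bianchi swap and with a quadratic $T^2 * \gU * \bgU$ piece coming from $\dt g^{-1}$, producing $-|\bar{\N}\gU + T\cdot \gU|^2$. Everything remaining is genuinely inhomogeneous and gets absorbed into $T * \gU * \tilde{\Omega} + \gU * \bgU * \tilde{\Omega} + \gU * \tilde{\N}\tilde{\Omega}$. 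Throughout I would repeatedly invoke Lemma \ref{l:BfieldChris} to reduce, at any given spacetime point, to the case $\gb = 0$, which simplifies the bookkeeping for the $G^{-1}, G$ contractions since off-diagonal blocks then vanish.
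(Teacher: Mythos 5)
Your proposal is correct and follows essentially the same route as the paper's proof: the variational identity $\dt \gU = -\N S$, the Bianchi/commutation manipulations converting $\gD\gU$ and $\bar{\gD}\bgU$ into $\dt\gU$ plus torsion and background-curvature corrections, the cancellation of the $S$-type terms against the time derivatives of the metrics in the norm, and the recognition of the perfect square $-|\bar{\N}\gU + T\cdot\gU|^2$. The only cosmetic difference is your invocation of Lemma \ref{l:BfieldChris} to normalize $\gb$ pointwise, which the paper reserves for the curvature and evolution computations rather than this step, but it does no harm here.
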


\begin{cor} \label{c:HgUbound}Supposing that the initial generalized metric is uniformly equivalent to a background, i.e. $\Lambda^{-1}\tilde{G}\leq G \leq \Lambda \tilde{G}$, then along a solution to the pluriclosed flow (\ref{f:PCF}),
	$$\left( \dt - \gD  \right) \brs{\gU}^2 \leq  -|\N\gU|^2-|\bar{\N}\gU+T*\gU|^2 + C(\Lambda,\tilde{g})|\gU|\left(|T|+|\gU|+1\right).$$
\end{cor}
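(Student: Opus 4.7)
The plan is to derive the inequality directly from the evolution identity of Proposition \ref{p:gUevol}
\[
\left(\dt - \gD\right)|\gU|^2 = -|\N\gU|^2-|\bar{\N}\gU+T\cdot\gU|^2 + T*\gU*\til{\Omega}+ \gU *\bgU * \til{\Omega} + \gU * \til{\N}\til{\Omega},
\]
by estimating each of the three inhomogeneous terms on the right using Cauchy--Schwarz and absorbing the background curvature tensors into the constant. The two negative square terms will be preserved verbatim, so all the work is in handling the last three.

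The key observation is that the hypothesis $\Lambda^{-1}\til G \le G \le \Lambda \til G$ restricted to the $\Lambda^{1,0}$-block yields $\Lambda^{-1}\til g^{-1} \le g^{-1} \le \Lambda \til g^{-1}$, and hence gives uniform equivalence of the Hermitian inner products $g$ and $\til g$ on $T^{1,0}$ as well as of $G$ and $\til G$. Consequently, for any tensor $X$ on $T^{1,0}\oplus\Lambda^{1,0}$, the norms $|X|_{g,G}$ and $|X|_{\til g,\til G}$ are comparable up to a multiplicative constant depending only on $\Lambda$.

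Since $\til \Omega$ and $\til\N \til\Omega$ are fixed background objects built from $\til g$ and $\til \gb$, they are bounded pointwise in the background norm by a constant depending only on $\til g$ (and finitely many of its derivatives). Combining with the norm equivalence above gives
\[
|\til\Omega| + |\til\N\til\Omega| \le C(\Lambda, \til g),
\]
where the norms are now those associated to $(g,G)$. Applying Cauchy--Schwarz to the three error terms then yields
\[
|T*\gU*\til\Omega| \le C|T|\,|\gU|, \qquad |\gU*\bgU*\til\Omega| \le C|\gU|^2, \qquad |\gU*\til\N\til\Omega| \le C|\gU|,
\]
with $C = C(\Lambda, \til g)$. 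Summing reproduces the claimed $C(\Lambda,\til g)|\gU|(|T|+|\gU|+1)$ and concludes the argument.

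I do not anticipate any serious obstacle here: the content of the corollary is simply that the uniform equivalence hypothesis lets one convert the schematic background contractions of Proposition \ref{p:gUevol} into a clean bound whose right-hand side depends only on $\gU$, $T$, $\Lambda$, and $\til g$. The only point requiring mild care is bookkeeping the index structure hidden in the $*$-notation so that each Cauchy--Schwarz step produces exactly the advertised quadratic or linear expression in $|\gU|$ and $|T|$.
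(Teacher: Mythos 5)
Your proposal is correct and is essentially identical to the paper's argument: the paper's proof is the one-line statement that the corollary ``follows by applying the Cauchy--Schwarz inequality to the result of the previous proposition,'' and your write-up simply fills in the routine details (uniform equivalence of norms via $\Lambda$, pointwise boundedness of the background quantities $\til{\Omega}$ and $\til{\N}\til{\Omega}$, and termwise Cauchy--Schwarz). No gaps.
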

\begin{proof}
	This follows by applying the Cauchy-Schwarz inequality to the result of the previous proposition.
\end{proof}

\begin{lemma} \label{l:Htrevol} Given $(M^{2n}, J)$ and $(\gw, \gb)$ a solution to pluriclosed flow (\ref{f:PCF}), with associated generalized metric $G$ and background generalized metric $\til{G}$, one has
	$$\left(\dt-\gD\right)\tr_G \widetilde{G}=-|\gU|_{g^{-1},G^{-1},\tilde{G}}^2+G^{\bB A}g^{\bn m}\tilde{\Omega}_{\bn m \bB A}.$$
\end{lemma}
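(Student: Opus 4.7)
The plan is to adapt the classical Aubin--Chern--Lu computation of $\gD \tr_G \widetilde{G}$ to the generalized-metric setting, exploiting the clean evolution $\dt G = -S$ from Proposition~\ref{p:Gevol}. Writing $\tr_G \widetilde{G} = G^{\bB A}\widetilde{G}_{A \bB}$ and using the variation formula $\dt G^{\bB A} = -G^{\bB M} G^{\bN A} \dt G_{M \bN} = G^{\bB M} G^{\bN A} S_{M \bN}$, the time derivative is immediate:
\begin{align*}
\dt \tr_G \widetilde{G} = G^{\bB M} G^{\bN A} S_{M \bN} \widetilde{G}_{A \bB}.
\end{align*}

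For the Laplacian, the point is that $\N$ is the Chern connection of $G$, so $\N G = \N G^{-1} = 0$ and the derivatives pass through to act on $\widetilde{G}$ alone:
\begin{align*}
\gD \tr_G \widetilde{G} = G^{\bB A} g^{\bj i} \N_{\bj} \N_i \widetilde{G}_{A \bB}.
\end{align*}
Since both Chern connections $\N$ and $\widetilde{\N}$ have $(0,1)$-part equal to $\delb$, the difference $\N - \widetilde{\N} = \gU$ is a $(1,0)$-form valued in endomorphisms of $T^{1,0}\oplus \Lambda^{1,0}$, and together with the compatibility $\widetilde{\N}\widetilde{G}=0$ this gives
\begin{align*}
\N_i \widetilde{G}_{A \bB} = -\gU_{iA}^D \widetilde{G}_{D \bB}, \qquad \N_{\bj}\widetilde{G}_{A \bB} = -\bgU_{\bj \bB}^{\bD} \widetilde{G}_{A \bD}.
\end{align*}

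Differentiating the first identity once more, and using that $\N_{\bj}$ acts as $\del_{\bj}$ on the holomorphic-index tensor $\gU_{iA}^D$, then invoking the curvature definition $\del_{\bj}\gG_{iA}^D = -\Omega_{i \bj A}^D$ and its background analogue, yields
\begin{align*}
\N_{\bj}\N_i \widetilde{G}_{A \bB} = \Omega_{i \bj A}^D \widetilde{G}_{D \bB} - \widetilde{\Omega}_{i \bj A \bB} + \gU_{iA}^D \bgU_{\bj \bB}^{\bE} \widetilde{G}_{D \bE}.
\end{align*}
Contracting with $g^{\bj i} G^{\bB A}$ and using $g^{\bj i}\Omega_{i \bj A}^D = G^{\bE D} S_{A \bE}$, the first term reproduces $\dt \tr_G \widetilde{G}$ exactly, the third is $|\gU|^2_{g^{-1}, G^{-1}, \widetilde{G}}$ in the convention of Proposition~\ref{p:gUevol}, and the second delivers the background curvature contribution. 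Subtracting $\gD$ from $\dt$ causes the $S$-contributions to cancel, yielding the claim after the harmless relabeling that identifies $g^{\bj i}\widetilde{\Omega}_{i \bj A \bB}$ with $g^{\bn m}\widetilde{\Omega}_{\bn m \bB A}$. No substantive obstacle arises beyond careful index bookkeeping; the essential structural feature is that $\dt G = -S$ is built from precisely the Chern curvature whose trace appears in $\gD \widetilde{G}$, so the Chern--Lu computation closes cleanly.
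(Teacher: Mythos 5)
Your proposal is correct and follows essentially the same route as the paper: compute $\dt$ via Proposition~\ref{p:Gevol}, pass $\gD$ through $G^{-1}$, use $\N\widetilde{G} = -\gU\cdot\widetilde{G}$, and observe that the resulting $S$-contraction cancels the time-derivative term. The only cosmetic difference is your ordering $\N_{\bj}\N_i$ versus the paper's $\N_m\N_{\bn}$, so you pick up the curvature from differentiating $\gU$ rather than $\bgU$; by the Hermitian symmetry of the Chern curvature the contracted results agree.
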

\begin{proof}
	Using Proposition \ref{p:Gevol} we compute
\begin{align} \label{f:Htr10}
\dt \tr_G \widetilde{G} = \dt (G^{\bB A}\tilde{G}_{A\bB})=G^{\bC A}G^{\bB D}S_{D\bC}\tilde{G}_{A\bB}.
\end{align}
	Furthermore,
	$$\gD\tr_{G}\tilde{G} = g^{\bn m} \N_m\N_{\bn} (G^{\bB A}\tilde{G}_{A\bB})=g^{\bn m}G^{\bB A} \N_m\N_{\bn}\tilde{G}_{A\bB}.$$
	However,
	$$\N_{\bn}\tilde{G}_{A\bB}=(\N_{\bn}-\tilde{\N}_{\bn})\tilde{G}_{A\bB} = -\tilde{G}_{A\bC}\bgU_{\bn \bB}^{\bC}.$$
	The second covariant derivative then yields
	$$\N_m\N_{\bn}\tilde{G}_{A\bB}= -\N_m(\tilde{G}_{A\bC}\bgU_{\bn \bB}^{\bC}) = \tilde{G}_{D\bC}\gU_{m A}^{D}\bgU_{\bn \bB}^{\bC}+\tilde{G}_{A\bC}(\Omega_{\bn m \bB}^{\bC} - \widetilde{\Omega}_{\bn m \bB}^{\bC}).$$
	Contracting this with $g$ yields
\begin{align} \label{f:Htr20}
\gD\tr_{G}\tilde{G}=|\gU|_{g^{-1},G^{-1},\tilde{G}}^2+G^{\bB A}\tilde{G}_{A\bC}(S_{\bB}^{\bC}-g^{\bn m}\widetilde{\Omega}_{\bn m \bB}^{\bC}).
\end{align}
Combining (\ref{f:Htr10}) and (\ref{f:Htr20}) gives the result.
\end{proof}

\begin{cor} \label{c:Htrbound} Given $(M^{2n}, J)$ and $(\gw, \gb)$ a solution to pluriclosed flow (\ref{f:PCF}), with associated generalized metric $G$ and background generalized metric $\til{G}$, such that
$\Lambda^{-1}\tilde{G}\leq G \leq \Lambda \tilde{G}$, one has
	$$\left(\dt-\gD\right)\tr_{G}\tilde{G}\leq - \gL^{-1} |\gU|^2_{g^{-1},G^{-1},G}+K(\Lambda,\tilde{g}).$$
\end{cor}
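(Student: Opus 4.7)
The plan is to apply the equivalence $\Lambda^{-1}\tilde{G}\leq G \leq \Lambda \tilde{G}$ termwise to the identity of Lemma \ref{l:Htrevol},
$$\left(\dt-\gD\right)\tr_G \widetilde{G}=-|\gU|_{g^{-1},G^{-1},\tilde{G}}^2+G^{\bB A}g^{\bn m}\tilde{\Omega}_{\bn m \bB A},$$
handling the norm term and the curvature term separately.

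For the norm term I would replace the outermost $\tilde{G}$ with $G$. Writing
$$|\gU|^2_{g^{-1},G^{-1},\tilde{G}} = \tilde{G}_{B\bD}\bigl(g^{\bj i}G^{\bC A}\gU_{iA}^{B}\bgU_{\bj \bC}^{\bD}\bigr),$$
the parenthesized tensor $T^{B\bD}$ is Hermitian positive semidefinite: after passing to bases in which $g^{\bj i}$ and $G^{\bC A}$ become coordinate deltas, it reads $\sum_{i,A}\gU_{iA}^{B}\bar{\gU}_{\bar{i}\bar{A}}^{\bar{D}}$, a sum of rank-one Hermitian squares in the free indices $(B,\bD)$. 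Since the hypothesis is equivalent to $\tilde{G}\geq \Lambda^{-1}G$ as Hermitian forms on $T^{1,0}\oplus\Lambda^{1,0}$, pairing against the PSD tensor $T$ preserves the inequality, giving
$$|\gU|_{g^{-1},G^{-1},\tilde{G}}^2 \geq \Lambda^{-1}|\gU|_{g^{-1},G^{-1},G}^2.$$

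For the curvature term, my key observation is that uniform equivalence of the generalized metrics descends to uniform equivalence of the underlying Hermitian metrics on $T^{1,0}$: the $W\bar{W}$ block of (\ref{f:G}) is simply $g^{-1}$, so $\Lambda^{-1}\tilde{G}\leq G \leq \Lambda \tilde{G}$ forces $\Lambda^{-1}\tilde{g}^{-1}\leq g^{-1}\leq \Lambda \tilde{g}^{-1}$, whence $g$ and $\tilde{g}$ are $\Lambda$-equivalent. Combining this with $G^{-1}\leq \Lambda \tilde{G}^{-1}$ yields the pointwise bound
$$\bigl|G^{\bB A}g^{\bn m}\tilde{\Omega}_{\bn m \bB A}\bigr| \leq \Lambda^{2}\bigl|\tilde{G}^{\bB A}\tilde{g}^{\bn m}\tilde{\Omega}_{\bn m \bB A}\bigr|,$$
and the right side depends only on the fixed background, hence is bounded by some $K(\Lambda,\tilde{g})$. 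Substituting both estimates into Lemma \ref{l:Htrevol} gives the stated inequality. I do not anticipate any genuine obstacle here; the only thing to keep straight is the norm convention and the block-wise observation that equivalence of $G$ and $\tilde{G}$ transfers to equivalence of $g$ and $\tilde{g}$.
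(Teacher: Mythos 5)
Your argument is essentially the paper's (whose proof is a one-line appeal to Lemma \ref{l:Htrevol} plus uniform equivalence), and your treatment of the norm term is correct: the tensor $g^{\bj i}G^{\bC A}\gU_{iA}^{B}\bgU_{\bj \bC}^{\bD}$ is Hermitian positive semidefinite in $(B,\bD)$, so pairing it against $\tilde{G}\geq \Lambda^{-1}G$ gives $|\gU|^2_{g^{-1},G^{-1},\tilde{G}}\geq \Lambda^{-1}|\gU|^2_{g^{-1},G^{-1},G}$. One intermediate step in the curvature term is not literally valid, however: since $\tilde{\Omega}_{\bn m \bB A}$ is not sign-definite in either index pair, the pointwise inequality $\brs{G^{\bB A}g^{\bn m}\tilde{\Omega}_{\bn m \bB A}}\leq \Lambda^{2}\brs{\tilde{G}^{\bB A}\tilde{g}^{\bn m}\tilde{\Omega}_{\bn m \bB A}}$ can fail --- the right-hand contraction may vanish while the left does not --- so replacing the contracting metrics one at a time does not preserve absolute values of a contraction against an indefinite tensor. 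The fix is immediate and does not change your conclusion: using the equivalence $\Lambda^{-1}\tilde{g}\leq g\leq \Lambda\tilde{g}$ that you correctly extract from the $W\bar{W}$ block, Cauchy--Schwarz gives $\brs{G^{\bB A}g^{\bn m}\tilde{\Omega}_{\bn m \bB A}}\leq C(\Lambda)\,\brs{\tilde{\Omega}}_{\tilde{g},\tilde{G}}$, which is the fixed background quantity absorbed into $K$.
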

\begin{proof} This follows in a straightforward way from Lemma \ref{l:Htrevol} using the uniform equivalence of $G$ and $\til{G}$.
\end{proof}

\begin{lemma}\label{l:Test}
	On a complex manifold $(M,J)$ with Hermitian metrics $g$ and $\tilde{g}$ having torsion potentials $\beta$ and $\tilde{\gb}$ respectively satisfying $\Lambda^{-1}\tilde{g}\leq g \leq \Lambda\tilde{g}$ and $|\gb|^2\leq \Lambda$, we have
	$$|\gU^g|^2_{g} \leq \Lambda^{2} |\gU^G|^2_{g^{-1},G^{-1},G}+K(\Lambda,\tilde{g}).$$
\end{lemma}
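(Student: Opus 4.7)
The strategy is to recover the Chern connection difference $\gU^g$ of the ordinary metrics from the $Z^a \to Z^b$ block of $\gU^G$ at each point. The key identity is that at any point where $\gb$ vanishes, the $Z^a \to Z^b$ component of $\gG^G$ reduces exactly to $\gG^g{}_{ia}^b$; this is immediate from the formulas (\ref{f:Gcomponents}) for $G$ and $G^{-1}$, since the $\gb$-dependent off-diagonal terms drop out of both the metric and its inverse. The B-field invariance encoded in Lemma \ref{l:BfieldChris} allows us to arrange $\gb(p) = 0$ at any chosen point.

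So I fix $p \in M$ and apply the constant B-field transformation with $\gg = \gb(p)$, which sets $\gb(p) = 0$ and replaces $\tilde{\gb}(p)$ by $\tilde{\gb}(p) - \gb(p)$; the latter is still pointwise bounded in terms of $\sqrt{\Lambda}$ and the background. A direct computation with the components of $G, G^{-1}$ then yields, at $p$,
\begin{equation*}
\gG^G{}_{iZ^a}^{Z^b} = \gG^g{}_{ia}^b, \qquad \gG^{\tilde{G}}{}_{iZ^a}^{Z^b} = \gG^{\tilde{g}}{}_{ia}^b + R{}_{ia}^b,
\end{equation*}
where the error tensor $R$ involves only $\tilde{\gb}(p)$ and first derivatives of the background $\tilde{g}, \tilde{\gb}$, hence is bounded pointwise by $K(\Lambda, \tilde{g})$. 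Subtracting gives the pointwise identity $\gU^g{}_{ia}^b = \gU^G{}_{iZ^a}^{Z^b} + R{}_{ia}^b$.

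To finish, I observe that at $\gb(p) = 0$ the generalized metric $G|_p$ decomposes orthogonally as $g_p \oplus g_p^{-1}$, so $|\gU^G|^2_{g^{-1}, G^{-1}, G}|_p$ splits as a sum of four non-negative block norms, one of which coincides with the $g$-norm of the $Z^a \to Z^b$ component of $\gU^G$. Applying Cauchy-Schwarz to $\gU^g{}_{ia}^b = \gU^G{}_{iZ^a}^{Z^b} + R{}_{ia}^b$, absorbing $|R|^2$ into $K$, and using $\Lambda^{-1}\tilde{G} \le G \le \Lambda \tilde{G}$ to control any pre- vs post-B-field norm discrepancies, then yields
\begin{equation*}
|\gU^g|^2_g\big|_p \leq \Lambda^2 \, |\gU^G|^2_{g^{-1}, G^{-1}, G}\big|_p + K(\Lambda, \tilde{g}),
\end{equation*}
and since $p \in M$ was arbitrary this is the claim.

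The main technical task is the block-level computation of $\gG^G$ and $\gG^{\tilde{G}}$ at $p$, together with verifying that the error $R$ depends only on $\tilde{\gb}(p)$ and background derivatives, with no uncontrolled derivatives of $\gb$ entering. This works precisely because the B-field gauge makes every $\gb$-dependent term in $\gG^G$ at $p$ divisible by $\gb(p) = 0$ or $\bar{\gb}(p) = 0$; the symmetric terms in $\gG^{\tilde{G}}$ are not killed, but they involve only background data and so pose no difficulty. Beyond this bookkeeping, the proof is routine.
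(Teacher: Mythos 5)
Your proof is correct and follows essentially the same route as the paper: both use the constant B-field gauge of Lemma \ref{l:BfieldChris} to normalize $\gb(p)=0$, identify the $Z\to Z$ block of $\gU^G$ at $p$ with $\gU^g$ up to an error controlled by $|\gb(p)|$, $\tilde{\gb}$, and first derivatives of the background, and then extract $|\gU^g|^2_g$ from the block structure of the norm by positivity. The only (immaterial) differences are that the paper gauges $G$ and $\tilde{G}$ by their respective potentials, so the error lands in the off-diagonal $W\to Z$ block rather than in the $Z\to Z$ block, and that the multiplicative constant your Cauchy--Schwarz step actually produces is an absolute constant like $2$ rather than literally $\Lambda^2$ --- a harmless looseness that the paper's own final inequality shares.
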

\begin{proof}
We will compute $\gU$ using Lemma \ref{l:BfieldChris}. Set $\gamma_{ij}\equiv \beta_{ij}(p)$ and $\tilde{\gamma}\equiv \tilde{\beta}_{ij}(p)$, and compute at $p$ as follows
	\begin{align*}
	\gU(p) =& \gG - \tilde{\gG}\\
	=& e^{-\gg}\begin{pmatrix}
	\gG^g & 0\\ 
	0& g\partial g^{-1}
	\end{pmatrix}e^{\gg} - e^{-\tilde{\gg}}\begin{pmatrix}
	\gG^{\tilde{g}} & 0\\ 
	0& \tilde{g}\del\tilde{g}^{-1}
	\end{pmatrix}e^{\tilde{\gg}}
	\end{align*}
	But $g\del g^{-1}$ is the natural connection action on $(T^*)^{1,0}$ since
	$$g\del_ig^{-1}(dz^j) = g\del_i(g^{\bk j}\delb_k) = g(g_{,i}^{\bk j}\delb_k ) =-g_{\bk l}g^{\bk\mu}g^{\bnu j}g_{\mu\bnu,i}dz^l=-(\gG^g)_{il}^jdz^l.$$
	Thus, the above can naturally be written as 
	$$\gU(p) = e^{-\gg}\begin{pmatrix}
	\gG^g & 0\\ 
	0& \gG^g
	\end{pmatrix}e^{\gg} - e^{-\tilde{\gg}}\begin{pmatrix}
	\gG^{\tilde{g}} & 0\\ 
	0& \gG^{\tilde{g}}
	\end{pmatrix}e^{\tilde{\gg}}$$
	Then since $\gg(p)=\beta(p)$ and $\tilde{\gg}(p)=\tilde{\beta}(p)$, after some simplification, we get
	$$\gU=\begin{pmatrix}
	\gU^g & 0\\
	-\tilde{\gb}\gG^{\tilde{g}}-\gG^{\tilde{g}}\tilde{\gb}+\gb\gG^g+\gG^g\gb & \gU^g\\
	\end{pmatrix}=\begin{pmatrix}
	\gU^g & 0\\
	\gb\gU^g+\gU^g\gb+(\gb-\tilde{\gb})\gG^{\tilde{g}}+\gG^{\tilde{g}}(\gb-\tilde{\gb}) & \gU^g\\
	\end{pmatrix}.$$
	Computing in Hermitian coordinates about $p$ simplifies this to  
	$$\gU=e^\gb \begin{pmatrix}
	\gU^g&0\\
	0&\gU^g
	\end{pmatrix}e^\gb +
	\begin{pmatrix}
	0&0\\
	(\gb-\tilde{\gb})*\tilde{T}&0\\
	\end{pmatrix}.$$
	By tensoriality, this is equality is global.
	But then, letting $\hat{G}$ be the diagonal metric with $G=e^{-\bgb}\hat{G}e^\gb$, it is clear that
	$$\brs{\gU}^2_{g,G,G} \geq \brs{\begin{pmatrix}
		\gU^g&0\\
		0&\gU^g
		\end{pmatrix}}^2_{g^{-1},G^{-1},G}- K(\gL,\til{g}) \geq \Lambda^{-2}\brs{\gU^g}^2_g - K(\Lambda,\tilde{g}),$$
as claimed.
\end{proof}

In the proposition below we will use a cutoff function to get a localized estimate.  In particular we will say that $\eta$ is a cutoff function for a ball of radius $R$ at $p \in M$ with respect to a background metric $\til{g}$ if
\begin{gather} \label{f:cutoffs}
\begin{split}
\eta_{|B(p,\frac{R}{2})} \equiv&\ 1,\qquad \supp \eta \subset B(p,R)\\
\brs{\N \eta} \leq&\ \frac{C}{R}, \qquad \brs{\N^2 \eta} \leq \frac{C}{R^2},
\end{split}
\end{gather}
where the constant $C$ depends on the background metric $\til{g}$.

\begin{prop}\label{p:HPhibound} Given $(M^{2n}, J)$, fix $(\gw, \gb)$ a solution to pluriclosed flow (\ref{f:PCF}), with associated generalized metric $G$.  Fix a background metric generalized metric $\til{G}(\til{g},\til{\gb})$ such that
$\Lambda^{-1}\tilde{G}\leq G \leq \Lambda \tilde{G}$, a cutoff function $\eta$ for a ball of radius $R$ with respect to $\til{g}$, constants $A, p \geq 1$, and define
	\begin{align*}
	\Phi = t\eta^p \brs{\gU}^2 + A\eta^{p-2}\tr_{G} \tilde{G}.
	\end{align*}
	There exists a constant $K=K(\Lambda,\tilde{G},p)>0$ such that
		\begin{align*}
		\left(\dt-\gD\right) \Phi \leq&\ -2 \Re \IP{\N \Phi, p \eta^{-1} \bar{\N} \eta}\\
		&\ + \brs{\gU}^2 \eta^{p-2} \left( 1 + t K \left(1 + R^{-2} \right) - A \gL^{-1} \right) + KA\left(1 + R^{-2} \right),
		\end{align*}
	\begin{proof} We first compute using Corollary \ref{c:HgUbound},
		\begin{align*}
		\left(\dt-\gD\right) (t\eta^p \brs{\gU}^2) =&\ \eta^p\brs{\gU}^2+t\eta^p\left(\dt-\gD\right)(\brs{\gU}^2)-2pt\eta^{p-1} \Re \IP{\N\brs{\gU}^2,\bar{\N}\eta}\\
		&\ -p(p-1)t\eta^{p-2}\brs{\N\eta}^2\brs{\gU}^2-pt\eta^{p-1}\brs{\gU}^2\gD\eta\\
		\leq&\ \eta^p\brs{\gU}^2+K(\Lambda,\tilde{g})t\eta^p\brs{\gU}(\brs{T}+\brs{\gU}+1)-2pt\eta^{p-1}\Re \IP{\N\brs{\gU}^2,\bar{\N}\eta}\\
		&-p(p-1)t\eta^{p-2}\brs{\N\eta}^2\brs{\gU}^2-pt\eta^{p-1}\brs{\gU}^2\gD\eta.
		\end{align*}
		For ease of computing we set $q = p-2$.  Using Corollary \ref{c:Htrbound} we have
		\begin{align*}
		\left(\dt-\gD\right)  \eta^q\tr_{G} \tilde{G}
		=&\ \eta^q\left(\dt-\gD\right)\tr_G\tilde{G} -2 q\eta^{q-1}\Re \IP{\N \tr_G\tilde{G},\bar{\N}\eta}\\
		&\ - q(q-1)\eta^{q-2}\tr_G\tilde{G}\brs{\N\eta}^2- q\eta^{q-1}\tr_G\tilde{G}\gD\eta\\
		\leq&\ - \eta^q \gL^{-1} \brs{\gU}^2+K(\Lambda,\tilde{g}) -2 q\eta^{q-1}\Re \IP{\N \tr_G\tilde{G},\bar{\N}\eta}\\
		&\ - q(q-1)\eta^{q-2}\tr_G\tilde{G}\brs{\N\eta}^2 - q\eta^{q-1}\tr_G\tilde{G}\gD\eta.
		\end{align*}
        Combining the two inequalities above, Lemma \ref{l:Test}, and (\ref{f:cutoffs}) yields
		\begin{gather} \label{f:Hphi10}
		\begin{split}
		\left(\dt-\gD\right) \Phi \leq&\ \eta^p\brs{\gU}^2+tK\eta^p\brs{\gU}(|T|+|\gU|+1)-2pt\eta^{p-1}\Re \IP{\N\brs{\gU}^2,\bar{\N}\eta}\\
		&\ -p(p-1)t\eta^{p-2}\brs{\N\eta}^2\brs{\gU}^2-pt\eta^{p-1}\brs{\gU}^2\gD\eta\\
		&\ +A\left\{-\eta^q \gL^{-1} \brs{\gU}^2+K -2q\eta^{q-1}\Re \IP{\N\tr_G\tilde{G},\bar{\N}\eta} \right.\\
		&\ \left. -q(q-1)\eta^{q-2}\tr_G\tilde{G}\brs{\N\eta}^2 -q\eta^{q-1}\tr_G\tilde{G}\gD\eta\right\}\\
		\leq&\ -2pt\eta^{p-1}\Re \IP{\N\brs{\gU}^2,\bar{\N}\eta} -2Aq\eta^{q-1}\Re \IP{\N\tr_G\tilde{G},\bar{\N}\eta}\\
		&\ + \brs{\gU}^2 \eta^{p-2} \left( 1 + t \left(K + C R^{-2} \right) - A \gL^{-1} \right) + K(\gL,p) A \left(1 + R^{-2} \right)
		\end{split}
		\end{gather}
		We also directly compute
		\begin{align*}
		\N \Phi = p t\eta^{p-1} \N \eta \brs{\gU}^2 + t\eta^p \N \brs{\gU}^2 + A \left\{q\eta^{q-1}\N\eta \tr_G\tilde{G}+\eta^q\N \tr_G\tilde{G}\right\}.
		\end{align*}
		Taking the inner product with $\eta^{-1} \N\eta$, multiplying by $p$, and rearranging we find
		\begin{gather} \label{f:Hphi20}
		\begin{split}
		-2& \Re \IP{pt \eta^{p-1}\N\brs{\gU}^2,\bar{\N}\eta}\\
		=&\ -2 \Re \IP{\N \Phi, p \eta^{-1} \bar{\N} \eta} + 2p(p t \eta^{p-2}\brs{\gU}^2+Aq\eta^{q-2}\tr_G\tilde{G})\brs{\N\eta}^2\\
		&\ + 2Ap \eta^{q-1}\Re\IP{\N\tr_G\tilde{G},\bar{\N}\eta}.
		\end{split}
		\end{gather}
We also will exploit the inequality
		\begin{gather} \label{f:Hphi30}
		\begin{split}
		|\N\tr_G\tilde{G}|^2 =&\ g^{\bj k}(\N_k \tr_G\tilde{G})(\N_{\bj} \tr_G\tilde{G})\\
		=&\ g^{\bj k}G^{\bgb \ga}G^{\bb a}\N_{k}\tilde{G}_{\ga\bgb}\N_{\bj}\tilde{G}_{a \bb}\\
		=&\ g^{\bj k}G^{\bgb \ga}G^{\bb a}(\N_{k}-\tilde{\N}_k)\tilde{G}_{\ga\bgb}(\N_{\bj}-\tilde{\N}_{\bj})\tilde{G}_{a \bb}\\
		=&\ g^{\bj k} G^{\bgb \ga}G^{\bb a}(\gU_{k\ga}^{\nu}\tilde{G}_{\nu \bgb})(\bgU_{\bj \bb}^{\bn}\tilde{G}_{a\bn})\\
		\leq&\ C(\Lambda) \brs{\gU}^2 .
		\end{split}
		\end{gather}
		Using (\ref{f:Hphi20}) and (\ref{f:Hphi30}) in (\ref{f:Hphi10}) yields
		\begin{align*}
		\left(\dt-\gD\right) \Phi \leq&\ -2 \Re \IP{\N \Phi, p \eta^{-1} \bar{\N} \eta}\\
		&\ + \brs{\gU}^2 \eta^{p-2} \left( 1 + t K \left(1 + R^{-2} \right) - A\gL^{-1} \right) + K A \left(1 + R^{-2} \right),
		\end{align*}
		as claimed.
	\end{proof}
\end{prop}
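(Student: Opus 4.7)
The plan is to compute $(\dt - \gD)\Phi$ term by term, using Corollary \ref{c:HgUbound} to handle the evolution of $\brs{\gU}^2$ and Corollary \ref{c:Htrbound} to handle the evolution of $\tr_G \til{G}$, and then to package the resulting cutoff-derivative cross terms into the form $-2\Re\IP{\N\Phi, p\eta^{-1}\bar\N\eta}$ plus absorbable errors.

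First I would apply the product/chain rule to the two pieces of $\Phi$ separately. For $t\eta^p\brs{\gU}^2$, expanding $(\dt - \gD)$ produces $\eta^p\brs{\gU}^2$ (from the $t$-derivative), the main term $t\eta^p(\dt - \gD)\brs{\gU}^2$, and cutoff error terms involving $\IP{\N\brs{\gU}^2, \bar\N\eta}$, $\brs{\N\eta}^2$, and $\gD\eta$. Substituting Corollary \ref{c:HgUbound} discards the good gradient squares and leaves $K\brs{\gU}(\brs{T} + \brs{\gU} + 1)$, which under Cauchy--Schwarz (absorbing the $\brs{T}$ piece into $|\bar\N\gU + T\cdot\gU|^2$ indirectly, or just using the uniform equivalence to bound $|T|^2$ by $|\gU|^2 + K$ via Lemma \ref{l:Test}) becomes $K(\brs{\gU}^2 + 1)$. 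The analogous computation for $A\eta^q \tr_G\til G$ with $q = p-2$ produces the crucial negative term $-A\eta^q\gL^{-1}\brs{\gU}^2$ coming from Corollary \ref{c:Htrbound} (here Lemma \ref{l:Test} is used to replace $|\gU^g|^2$ in the $\brs{\gU}$ of that corollary with the generalized $|\gU|^2$ at the cost of an additive constant, explaining the $\gL^{-1}$ factor).

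Next I would collect the two cross terms $-2pt\eta^{p-1}\Re\IP{\N\brs{\gU}^2, \bar\N\eta}$ and $-2Aq\eta^{q-1}\Re\IP{\N\tr_G\til G, \bar\N\eta}$ and reorganize them using the formula
\begin{equation*}
\N\Phi = pt\eta^{p-1}\N\eta \brs{\gU}^2 + t\eta^p\N\brs{\gU}^2 + A q\eta^{q-1}\N\eta\,\tr_G\til G + A\eta^q\N\tr_G\til G.
\end{equation*}
Pairing this with $p\eta^{-1}\bar\N\eta$ and rearranging expresses $-2pt\eta^{p-1}\Re\IP{\N\brs{\gU}^2, \bar\N\eta}$ as $-2\Re\IP{\N\Phi, p\eta^{-1}\bar\N\eta}$ plus a leftover $+2Ap\eta^{q-1}\Re\IP{\N\tr_G\til G, \bar\N\eta}$ and a pure $\brs{\N\eta}^2$ term. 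Combining the leftover with the original $\tr_G\til G$ cross term and then applying Cauchy--Schwarz together with the key inequality $\brs{\N\tr_G\til G}^2 \leq C(\gL)\brs{\gU}^2$ (\ref{f:Hphi30}), obtained by writing $\N\til G = -\gU\cdot\til G$ and using the uniform equivalence, converts that term into something of the form $\brs{\gU}^2\eta^{q} \cdot (\text{small}) + K A R^{-2}$.

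Finally I would collect all the remaining $\brs{\gU}^2\eta^{q}$ coefficients: the $\eta^p\brs{\gU}^2$ from the $t$-derivative gives $1$, the $tK$ from Corollary \ref{c:HgUbound} combined with the $R^{-2}$ terms coming from $\brs{\N\eta}^2$ and $\gD\eta$ gives $tK(1 + R^{-2})$, and the dominant $-A\gL^{-1}$ comes from the trace estimate. The error terms not proportional to $\brs{\gU}^2$ collect into $KA(1 + R^{-2})$, yielding the stated inequality. The main technical obstacle is exactly the cross-term reorganization: one needs the coefficient structure of $\Phi$ (in particular the choice of the exponent $p-2$ on the trace piece) to match so that the $\N\brs{\gU}^2$ cross term is absorbed cleanly into $\N\Phi$, while the leftover $\N\tr_G\til G$ piece can be controlled purely by $\brs{\gU}$ via (\ref{f:Hphi30}); everything else is routine parabolic bookkeeping.
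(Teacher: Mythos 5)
Your proposal is correct and follows essentially the same route as the paper's proof: expand $(\dt-\gD)$ on each piece of $\Phi$, insert Corollaries \ref{c:HgUbound} and \ref{c:Htrbound}, rewrite the $\N\brs{\gU}^2$ cross term via $\N\Phi$ paired with $p\eta^{-1}\bar{\N}\eta$, and control the leftover $\N\tr_G\til{G}$ terms using $\brs{\N\tr_G\til{G}}^2\leq C(\gL)\brs{\gU}^2$ together with Lemma \ref{l:Test} and the cutoff bounds. The identification of the exponent $q=p-2$ as what makes the absorption work is exactly the point of the construction.
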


\section{Main Results}

\begin{prop}\label{t:C1est} 
Given $(M^{2n}, J)$, fix $(\gw, \gb)$ a solution to pluriclosed flow (\ref{f:PCF}) on $[0,\tau), \tau \leq 1$, with associated generalized metric $G$.  Fix a background generalized metric $\til{G}(\til{g},\til{\gb})$ such that
$\Lambda^{-1}\tilde{G}\leq G \leq \Lambda \tilde{G}$, and some $0 < R < 1$.  There exists a constant $K = K(p,\gL, \til{G})$ such that
\begin{align*}
	\sup_{B_{\frac{R}{2}} (p) \times \{t\}} \brs{\gU(G, \til{G})}^2 \leq K \left( \frac{1}{t} + R^{-4} \right).
\end{align*}	
\end{prop}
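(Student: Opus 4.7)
My plan is to apply the maximum principle to the test function $\Phi$ of Proposition \ref{p:HPhibound} on the parabolic cylinder $B_R \times [0, T]$, where $T \in (0, \tau)$ is the time at which we want to bound $\brs{\gU}^2$. The essential subtlety is to allow the constant $A$ in the definition of $\Phi$ to depend on $T$; this is what produces the $1/T$ term in the final estimate.

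Specifically, I would fix the exponent $p = 2$ and, given $T$, take $A := 2\Lambda\bigl(1 + T K_0 (1 + R^{-2})\bigr)$, where $K_0$ is the constant from Proposition \ref{p:HPhibound}. For any $t \in [0, T]$ this yields
$$A\Lambda^{-1} - 1 - t K_0 (1 + R^{-2}) = 1 + (2T - t) K_0 (1 + R^{-2}) \geq \tfrac{1}{2} A \Lambda^{-1},$$
so the coefficient of $\brs{\gU}^2$ on the right-hand side of Proposition \ref{p:HPhibound} is negative and of magnitude at least $\tfrac{1}{2} A\Lambda^{-1}$.

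Next I would run the parabolic maximum principle. At any interior maximum $(x_*, t_*)$ of $\Phi$ the gradient term in the differential inequality vanishes because $\nabla \Phi = 0$, and $(\partial_t - \Delta)\Phi \geq 0$; the sign above then forces
$\brs{\gU}^2(x_*, t_*) \leq 2 K_0 \Lambda (1 + R^{-2})$.
On the parabolic boundary $\Phi \leq A \, \tr_G \til{G} \leq A C_1$, where $C_1 = C_1(\Lambda, n)$ bounds $\tr_G\til{G}$ via the uniform equivalence hypothesis. Substituting the interior bound back into $\Phi$ and using $A \leq C_2(1 + T R^{-2})$ (valid since $R \leq 1$), I would obtain $\Phi_{\max} \leq C_3(1 + T R^{-2})$ on the whole cylinder.

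Finally, since $\eta \equiv 1$ on $B_{R/2}$, the definition of $\Phi$ gives $T \brs{\gU}^2 \leq \Phi_{\max} \leq C_3(1 + T R^{-2})$ there, so $\brs{\gU}^2 \leq C_3/T + C_3 R^{-2}$; bounding $R^{-2} \leq R^{-4}$ for $R \leq 1$ delivers the claim. I expect the key pitfall to avoid is the temptation to make $A$ independent of $T$: with a $T$-independent $A \sim R^{-2}$ one obtains only $\brs{\gU}^2 \lesssim R^{-2}/T$, which is strictly weaker than $K(1/T + R^{-4})$ for small $T$. Allowing $A$ to scale linearly with $T$ is precisely what keeps the boundary contribution $A C_1$ to $\Phi_{\max}$ small enough when $T$ is small for the clean $1/T$ term to emerge after dividing.
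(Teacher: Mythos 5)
Your proposal follows the same route as the paper: apply the parabolic maximum principle to the auxiliary function $\Phi = t\eta^p\brs{\gU}^2 + A\eta^{p-2}\tr_G\til{G}$ from Proposition \ref{p:HPhibound} with $p=2$. The one place you diverge is the choice of $A$, and there your version is in fact sharper than the paper's argument. The paper takes $A \geq C\gL K R^{-2}$ uniformly in time so that the coefficient of $\brs{\gU}^2$ is nonpositive for all $t\leq 1$, and then asserts that $\Phi \leq K$ at time $0$; but with that choice $\Phi(\cdot,0) = A\,\tr_G\til{G} \sim R^{-2}$, so the maximum principle as written only yields $\Phi \leq K(R^{-2} + tR^{-4})$ and hence $\brs{\gU}^2 \leq K(R^{-2}/t + R^{-4})$, which is strictly weaker than the stated bound when $t \ll R^2$ (and the weaker form would not feed correctly into the blow-up argument for Theorem \ref{t:mainthm}). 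Your choice $A = 2\gL\bigl(1 + TK_0(1+R^{-2})\bigr)$ keeps the coefficient of $\brs{\gU}^2$ below $-\tfrac{1}{2}A\gL^{-1}$ on $[0,T]$ while keeping the parabolic-boundary value $A\,\tr_G\til{G} \leq C(1+TR^{-2})$, which is exactly what is needed to extract the clean $1/t$ term; this is the correct repair. The remaining steps (vanishing of the gradient term at an interior maximum, the pointwise bound on $\brs{\gU}^2$ there, and restriction to $B_{R/2}$ where $\eta\equiv 1$) are all sound. The only point worth making explicit is that if the maximum of $\Phi$ occurs at an interior point where $\eta = 0$, the differential inequality is not directly available because of the $\eta^{-1}$ factor in the gradient term; but at such a point $\Phi = A\,\tr_G\til{G}$ is bounded outright, so the conclusion is unaffected.
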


\begin{proof}	
	Let $\eta$ denote a cutoff function for $B_R$, fix $p = 2$, and define $\Phi$ as in Proposition \ref{p:HPhibound}.  We can choose $A \geq C \gL K R^{-2}$ so that for times $t \leq 1$, Proposition \ref{p:HPhibound} yields
	\begin{align*}
			\left(\dt-\gD\right) \Phi \leq&\ -2 \Re \IP{\N \Phi, p \eta^{-1} \bar{\N} \eta} + K A \left(1 + R^{-2} \right)\\
			\leq&\ -2 \Re \IP{\N \Phi, p \eta^{-1} \bar{\N} \eta} + K R^{-4}.
	\end{align*}
	Since $\Phi \leq K$ at time $0$, it follows from the maximum principle that
	\begin{align*}
	\sup_{M \times \{t\}}\Phi\leq K\left(1 + t R^{-4} \right)
	\end{align*}
	But then for any $x\in B_{\frac{R}{2}}$,
	\begin{align*}
	\brs{\gU}^2(x,t) =&\ \eta^p(x) \brs{\gU}^2(x,t) \leq \frac{1}{t}\Phi(x,t)\leq K \left( \frac{1}{t} + R^{-4} \right),
	\end{align*}
	as claimed.
\end{proof}

\begin{thm} 
Given $(M^{2n}, J)$, fix $(\gw, \gb)$ a solution to pluriclosed flow (\ref{f:PCF}) on $[0,\tau), \tau \leq 1$, with associated generalized metric $G$.  Fix a background metric generalized metric $\til{G}(\til{g},\til{\gb})$ such that
$\Lambda^{-1}\tilde{G}\leq G \leq \Lambda \tilde{G}$.  There exists $\rho > 0$ depending on $\til{g}$ such that for all $0 < R < \rho$, and $k \in \mathbb N$, there exists a constant $K = K(k,\gL, \til{G})$ such that
\begin{align*}
\sup_{B_{\frac{R}{2}}(p) \times \{t\}} f_k(x,t)\leq K \left( \frac{1}{t} + R^{-4} \right).
\end{align*}	
\end{thm}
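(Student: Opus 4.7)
The plan is to argue by induction on $k$, with base case $k = 0$ given by Proposition \ref{t:C1est}, and the inductive step carried out via a parabolic blowup/contradiction argument adapted from \cite{StreetsPCFBI}. First I would fix $\rho > 0$ comparable to the curvature radius of $\til{g}$, so that on $\til{g}$-balls of radius $\rho$ around any point all derivatives of $\til{G}$ are uniformly controlled. For the inductive step, assume the estimate is known for $f_{k-1}$ with some constant $K_{k-1}$, and suppose for contradiction that no constant $K$ makes the stated bound for $f_k$ hold. Then there exist a sequence of solutions $(\gw_N, \gb_N)$ to pluriclosed flow satisfying the hypotheses, times $t_N \in (0, 1]$, radii $R_N \in (0, \rho)$, and points $y_N \in B_{R_N/2}(p_N)$ such that
\[
f_k(G_N, \til{G})(y_N, t_N) \,>\, N\!\left(\tfrac{1}{t_N} + R_N^{-4}\right).
\]

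Next I would parabolically rescale at $(y_N, t_N)$ with $\gl_N := f_k(G_N,\til{G})(y_N, t_N)^{1/2}$, setting
\[
G'_N(x,t) \,=\, \gl_N^2 \, G_N\!\left(y_N + \gl_N^{-1} x,\; t_N + \gl_N^{-2} t\right),
\]
and analogously $\til{G}'_N$, $\gb'_N$. Since the quantity $f_k$ is scale-invariant by construction, $f_k(G'_N, \til{G}'_N)(0, 0) = 1$, and the two-sided equivalence $\gL^{-1} \til{G}'_N \leq G'_N \leq \gL \til{G}'_N$ is preserved by the rescaling. Because $\gl_N R_N \to \infty$ and $\gl_N^2 t_N \to \infty$, the rescaled flows are defined on parabolic cylinders exhausting $\mathbb{C}^n \times (-\infty, 0]$, and the rescaled backgrounds $\til{G}'_N$ converge in $C^\infty_{loc}$ to the flat metric on $\mathbb{C}^n$ (using the choice of $\rho$ to bound higher derivatives of $\til{G}$ before rescaling).

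The crucial step is to pass to a smooth subsequential limit. Applying Proposition \ref{t:C1est} to the rescaled flow on any fixed parabolic cylinder produces uniform $C^1$ bounds on $G'_N$ on compact subsets of $\mathbb{C}^n \times (-\infty, 0]$. Combined with the inductive hypothesis for $f_{k-1}$ applied to the rescaled solutions and the $C^{\ga}$ theory for the generalized metric developed in \cite{StreetsPCFBI}, standard parabolic Schauder bootstrapping then upgrades these to uniform $C^{m}_{loc}$ bounds for every $m$. A diagonal extraction yields a smooth limit pluriclosed flow $G'_\infty$ on $\mathbb{C}^n \times (-\infty, 0]$ with flat background and satisfying $f_k(G'_\infty, \til{G}'_\infty)(0,0) = 1$ by continuity of $f_k$ under smooth convergence.

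To close the argument, I would apply Proposition \ref{t:C1est} to the limit at an arbitrary point $(x, t)$ on balls $B_R(x)$ and time intervals $[t - T, t]$; letting $R, T \to \infty$ the right hand side tends to zero, forcing $\gU(G'_\infty, \til{G}'_\infty)(x,t) = 0$. Hence $\gU \equiv 0$ on the limit, and consequently every covariant derivative $\N^j \gU$ also vanishes, so $f_k(G'_\infty, \til{G}'_\infty) \equiv 0$, contradicting the normalization $f_k(0,0) = 1$. The main obstacle will be justifying the $C^\infty_{loc}$ convergence of the rescaled solutions: since pluriclosed flow is a quasilinear parabolic system for $(g, \gb)$ rather than a scalar equation, naive bootstrapping from $L^\infty$ bounds fails, and one genuinely needs the $C^\ga$ regularity theory of \cite{StreetsPCFBI} for the generalized metric combined with careful scaling bookkeeping to ensure that the estimates of Proposition \ref{t:C1est} behave correctly under rescaling as $\gl_N \to \infty$.
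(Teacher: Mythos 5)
Your overall strategy --- blow up by $f_k$ along a contradicting sequence, extract a limit flow on $\mathbb{C}^n$ with flat background, and show $\gU$ vanishes on the limit --- is the paper's, but the mechanism you use to get compactness is genuinely different. The paper does not induct on $k$: it \emph{refines} the contradicting sequence to points maximizing the weighted quantity $t f_k/(1+t\,d(\cdot,\del B_R)^{-4})$ over $B_R(p)\times[0,t_i]$. This point-picking immediately yields $f_{k,i}\le 64$ on the unit parabolic cylinder after rescaling, i.e.\ a uniform $C^{k+1}$ bound on the rescaled metrics, so a single Schauder step (using $k\ge 1$ so that the source $\del G*\del G$ is H\"older) gives $C^{k+1,\alpha}$ bounds and hence $C^{k+1}$ subconvergence with $f_{k,\infty}(0,0)=1$; the only other input is the $k=0$ estimate, which forces $f_{0,i}\to 0$ and hence a constant limit. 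You instead run a true induction, deriving compactness from the inductive $f_{k-1}$ bound on the rescaled solutions. That closes cleanly for $k\ge 2$ (a uniform $C^k$ bound makes coefficients and source $C^{k-2,\alpha}$, and two rounds of Schauder reach $C^{k+1,\alpha}$) and spares you the point selection; the cost is concentrated at $k=1$, which is exactly where your write-up reaches for outside machinery.

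Two concrete issues. First, at $k=1$ the inductive hypothesis gives only a uniform $C^1$ bound on $G'_N$, so $\del G'_N*\del G'_N$ is merely $L^\infty$ and Schauder does not apply directly; but you should not need the $C^\alpha$ theory of \cite{StreetsPCFBI} here (importing it also undercuts the point of the paper, which is to replace that argument). A $C^1$ bound is precisely what defuses the De Giorgi systems obstruction: parabolic $W^{2,p}$ estimates with $C^{0,\alpha}$ coefficients and bounded source give $\del G'_N\in C^{0,\gg}$, after which Schauder bootstraps to all orders. (Alternatively, the inductive bound actually forces $f_{k-1,N}\le C\gl_N^{-2}(t_N^{-1}+R_N^{-4})\to 0$, so $\gU\to 0$ uniformly.) Second, your closing step applies Proposition \ref{t:C1est} to the limit with $R,T\to\infty$, which its statement ($0<R<1$, $\tau\le 1$) does not literally permit; it is cleaner to pass the vanishing of $f_{k-1,N}$ (or $f_{0,N}$) to the limit, concluding $\gU\equiv 0$, $G'_\infty$ constant, and $f_k(G'_\infty)(0,0)=0$, the desired contradiction. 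Finally, note that the points $y_N+\gl_N^{-1}x$ may exit $B_{R_N/2}(p_N)$, so the inductive estimate must be invoked on a slightly larger ball; this bookkeeping is what the paper's weight $d(\cdot,\del B_R)^{-4}$ is designed to absorb.
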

\begin{proof}
	The case $k = 0$ is established in Proposition \ref{t:C1est}.  For $k>0$, suppose otherwise. We choose $\rho > 0$ so that, for all $x \in B_R(p)$, the exponential map associated to $\til{g}$ is a local diffeomorphism on a ball of radius $\rho$ with uniform estimates on the pullback metric $\exp_p^* \til{g}$.  Given $G_t$ a solution to pluriclosed flow as described and $0 < R < \rho$, suppose there exist points $(x_i,t_i)\in B_R(p)\times [0,\tau)$ s.t. 
\begin{align*}
\frac{t_if_{k}(x_i,t_i)}{1 + t_i d(x_i, \del B_{R})^{-4}} \nearrow \infty.
\end{align*}
  We refine to points $(\tilde{x}_i,\tilde{t}_i)$ such that 
  \begin{align} \label{f:supprop}
  \frac{\til{t}_if_{k}(\til{x}_i,\til{t}_i)}{1 + \til{t}_i d(\til{x}_i, \del B_{R})^{-4}} = \sup_{B_R(p) \times [0,t_i]}\frac{t_if_{k}(x_i,t_i)}{1 + t_i d(x_i, \del B_{R})^{-4}}.
  \end{align}
From here on, we will drop these decorations and refer only to the refined points.  By construction, for each $x_i$ we can use  the exponential map of $\til{g}$ on a ball of radius $d(x_i, \del B_R)$ to pullback $G$ and so work in complex coordinate charts.  Set $\sigma_i = f_k(x_i,t_i)$, and define a family of metrics on $B(0,\sqrt{\sigma_i} d(x_i,\del B_R))\times [-t_i\sigma_i,(\tau-t_i)\sigma_i)$ by
	$$G_i(x,t)=G(x_i+\frac{x}{\sqrt{\sigma_i}},t_i+\frac{t}{\sigma_i})$$
	where $\tilde{G}_i(x,t)$ is defined similarly. Each of these indexed metrics is actually a solution to (\ref{eq:Gevol}) on their domains as $\dt G_i(x,t)= \frac{1}{\sigma_i}\dt G$ and $S_i(x,t) = \frac{1}{\sigma_i}S$. Associated to these metrics, are quantities $f_{k,i}$ given by
	$$f_{k,i}(x,t)=\frac{1}{\sigma_i}f_{k}(x_i+\frac{x}{\sqrt{\sigma_i}},t_i+\frac{t}{\sigma_i}),$$
	and thus by construction
	$$f_{k,i}(0,0)=1.$$
	Notice that since $t_i\sigma_i\to \infty$ and $d(x_i,\del B_R)^2 \gs_i \to \infty$, eventually all of these solutions and associated quantities exist on $B_1(0)\times [-1,0]\subset \mathbb C^n \times \mathbb R$.
	
	Also note that since for sufficiently large $i$, $t_i\sigma_i\geq 2$, one has
	$$t_i+\frac{t}{\sigma_i}\geq \frac{t_i}{2}$$
	for any $t\in[-1,0]$.  Similarly, for sufficiently large $i$ it will hold for $x \in B_1(0)$ that
	\begin{align*}
	d(x_i + \frac{x}{\sqrt{\gs_i}}, \del B_R) \geq \tfrac{1}{2} d(x_i, \del B_R)
	\end{align*}
	This implies using (\ref{f:supprop}) that for any $(x,t) \in B_1(0) \times [-1,0]$, one has
	\begin{align*}
	\frac{t_i}{64(1 + t_i d(x_i,\del B_R)^{-4})}  f_{k,i}(x,t)
	 \leq&\ \frac{ \left(t_i + \frac{t}{\gs_i} \right)}{1 + \left( t_i + \frac{t}{\gs_i} \right) d(x_i + \frac{x}{\sqrt{\gs_i}}, \del B_R)^{-4}} f_{k,i}(x,t)\\
	 \leq&\ \frac{t_i}{1 + t_i d(x_i, \del B_R)^{-4}} f_{k,i} (0,0).
	\end{align*}
	Thus we obtain
	$$f_{k,i}(x,t) = \frac{1}{\sigma_i}f_{k}(x_i+\frac{x}{\sqrt{\sigma_i}},t_i+\frac{t}{\sigma_i})\leq 64$$
	on $B_1(0)\times[-1,0]$ for sufficiently large $i$.
	This is a uniform $C^{k+1}$ estimate, which implies a $C^{k,\alpha}$ estimate for any $\alpha\in(0,1)$. As our equation is of the form
	$$\dt (G_i)_{n\bj} = (G_i)^{\bl k}(G_i)_{n\bj,k\bl}+\del G_i * \del G_i$$
	and a uniform $C^{k,\alpha}$-estimate for $G_i$ implies a uniform $C^{k-1,\alpha}$-estimate for $\del G_i*\del G_i$. As $k\geq 1$, we are exactly in the case of the Schauder estimates. So, on $B(0,\frac{1}{2})\times [-\frac{1}{2},0]$, we have a uniform $C^{k+1,\alpha}$-estimate. Applying Ascoli-Arzel\'a then gives subsequential $C^{k+1}$-convergence of the metrics to some limit $G_\infty$.  This convergence in particular implies that
	\begin{align*}
	f_{k,\infty}(0,0) = 1.
	\end{align*}
	We note that the rescaled background metrics converge in $C^\infty$ to a metric $\tilde{G}_\infty$ which must be Euclidean.  Furthermore, using the estimate on $f_0$ it follows that for points in $B(0,\tfrac{1}{2}) \times [-\tfrac{1}{2}, 0]$, 
\begin{align*}
0\leq f_{0,i}(x,t) =&\ \frac{1}{\sigma_i}f_{0}(x_i+\frac{x}{\sqrt{\sigma_i}},t_i+\frac{t}{\sigma_i}) \leq \frac{C}{\gs_i} \left( \frac{1}{t_i} + d(x_i, \del B_R)^{-4} \right) \to 0.
\end{align*}
Hence the metric $G_\infty$ is constant in $B(0,\frac{1}{2})$ after the blow-up. Therefore $f_{k,\infty}(0,0) = 0$, which is a contradiction.
\end{proof}

\bibliographystyle{acm}

\end{document}